\theoremstyle{plain}
\newtheorem{theorem}{Theorem}
\newtheorem{lemma}{Lemma}
\newtheorem{corollary}{Corollary}
\newtheorem{proposition}{Proposition}
\theoremstyle{definition}
\newtheorem{remark}{Remark}
\begin{document}

\title{Spectra of Dirichlet Laplacian in 3-dimensional polyhedral layers}

\author[Bakharev F. L.]{Bakharev F. L.${}^\dagger$}

\thanks{\textsc{${}^\dagger$}{Chebyshev Laboratory, St. Petersburg State University, 14th Line V.O., 29, Saint Petersburg 199178 Russia}}

\email{fbakharev@yandex.ru}

\author[Matveenko S. G.]{Matveenko S. G.${}^\dagger$}

\email{matveis239@gmail.com}

\thanks{{\it E-mail addresses:} \texttt{fbakharev@yandex.ru, matveis239@gmail.com}}

\keywords{Laplace operator, Dirichlet layers, discrete spectrum, continuous spectrum}

\begin{abstract}
The structure of the spectrum of the three-dimensional Dirichlet Laplacian in the 3D polyhedral layer of fixed width is studied.
It appears that the essential spectrum is defined by the smallest dihedral angle that forms the boundary of the layer while the discrete spectrum is always finite. An example of a layer with the empty discrete spectrum is constructed. The spectrum is proved to be nonempty in regular polyhedral layer.

\end{abstract}

\thanks{The work is supported by the Russian Science Foundation grant 19-71-30002}

\maketitle

\section{Introduction}

The study of the Dirichlet spectral problem for the Laplace operator in various unbounded domains has attracted the attention of researchers during last decades. In recent years, many papers on spectra of quantum waveguides (regions with cylindrical outlets to infinity and Dirichlet conditions at the boundary) have appeared. A slightly smaller number of works discuss the so-called Dirichlet layers and their junctions, which structurally may be much more complicated than waveguides. Nevertheless, spectra of problems in the layers are similar to spectra of problems in waveguides. This is due to the fact that in layers, as well as in waveguides, on the one hand, a countable number of identical non-intersecting cubes can be inscribed, and on the other hand, one cannot inscribe an arbitrarily large cube. This leads to the fact that the corresponding problems usually have an essential spectrum $\sigma_{ess}$ occupying the ray on the real axis with a positive cut-off point. Below its threshold there may exist a discrete spectrum $\sigma_d$ corresponding to trapped waves. The appearance of trapped waves is often a consequence of the structure of the domain contained in a sufficiently large ball (the shape of the junction), while the characteristics of the essential spectrum are determined by the structure of the waveguide or layer at infinity.

Several works focused on Dirichlet layers are worth mentioning. In \cite{DuExKr01} the authors study the spectra of Dirichlet layers with constant width built around a special type of two-dimensional smooth surfaces in $\mathbb{R}^3$, assuming that the surface ``flattens out'' at infinity, meaning its curvature approaches zero. The authors provide sufficient conditions for trapped wave emergence in such layers. In \cite{CaExKr04} the authors expand on the ideas from \cite{DuExKr01}, dropping some assumptions, and consider more complex topological structures and a wider class of layers. Nonetheless, the flattening out of the generating surface at infinity remains a key assumption. Note also \cite{LiLu07} and \cite{LuRo12} which develop and generalize the results from the aforementioned two articles, in particular, to the multi-dimensional case. In \cite{ExTa10}, the authors allow for a curvature explosion at one of the surface points, considering a conical layer. This leads to a surprising result: an infinite number of eigenvalues below the threshold appears.

This work studies the spectral properties of the Dirichlet problems in constant width layers constructed from polyhedral three-dimensional angles. The analog in two-dimensional plane are the ``${\sf L}$-shaped'' waveguides, the spectra of which have been well studied (see, for example, works \cite{ExSeSt89, Na11, DaRa12, NaSh14} and monograph \cite{ExKo15}). In the first known work \cite{DaLaOu18} the question of the structure of the essential spectrum of a Fichera layer (a layer constructed from a trihedral angle with all flat angles equal to $90^\circ$) is studied and the finiteness of the number of eigenvalues below the threshold is established. The result is refined in work~\cite{BaAI20} where the existence of trapped waves is proven, and the same result is established in higher dimensions. The uniqueness of the eigenvalue below the threshold of the continuous spectrum in this situation remains an open question, which has only been verified numerically.

This work extends the research in \cite{BaAI20} by considering the Dirichlet problem for the Laplacian operator in a layer $\Pi$ built on a three-dimensional polyhedral angle. The assumption that a sphere can be placed in the angle to touch all its faces is imposed. The structure of the essential spectrum is established, namely, it is shown that it covers the ray with a cut-off point which coincides with the first eigenvalue $\Lambda_\dagger$ of the Dirichlet problem in an {\sf L}-shaped waveguide with an opening angle equal to the minimum of the dihedral angles of $\Pi$. 
As the essential spectrum consists of non-isolated points, it coincides with the continuous spectrum of the problem. It should be noted that the polyhedral layer may have a rich symmetry group and hence it may occur eigenvalues embedded in continuous spectrum (see, for instance, \cite{EvLeVa}). Thus the fine structure of the essential spectrum remains an open question. 
It is then established that for regular polyhedral angles the corresponding layers necessarily have a non-empty discrete spectrum. However, a general statement of this sort cannot be made --- an example of a trihedral layer with an empty discrete spectrum is provided. Note that in \cite{LiLu07} it is proven that for sufficiently thin constant thickness layers built on the graphs of smooth strictly convex functions in $\mathbb{R}^3$ the discrete spectrum is non-empty when the layers are flattened at infinity.

The work has the following structure. Section \ref{sec-2} contains a description of the domain, the statement of the Dirichlet problem and preliminary information about the spectrum. In Section \ref{sec-3} known results on {\sf L}-shaped waveguides are collected. In Section~\ref{sec-4} we study the structure of the spectrum, specifically, we establish the finiteness of the discrete spectrum and prove that the essential spectrum is the ray $[\Lambda_\dagger,+\infty)$. Note that this proof is slightly different from the ones proposed in the works \cite{DaLaOu18} and \cite{BaAI20} and, in our opinion, is more elementary.

In Section \ref{sec-6} an example of a trihedral layer is given, for which the discrete spectrum is empty. In Section \ref{sec-7} we provide a proof of the non-emptiness of the discrete spectrum for regular polyhedral layers. This proof generally repeats the proof from the work \cite{BaAI20} and is given for the sake of completeness. Section~\ref{sec-8} contains possible generalizations and questions that remain unanswered.

\section{Problem statement}
\label{sec-2}
\subsection{Domain geometry}
\label{sub-sec-2.1}

We consider a convex $n$-gon $P_1'P_2'\ldots P_n'$ ($n\geq 3$) in $\mathbb{R}^3$, assuming that its plain doesn't contain an origin $O$. A convex hull of rays  $\ell'_j=O'P_j'$ forms a polyhedral angle with $n$ faces 
$$\Upsilon' = \mathop{\rm conv}\nolimits\{ \ell'_j \colon 1\leq j\leq n\}.$$ 
Its boundary $\Gamma'=\partial \Upsilon'$ consists of vertex angles $\Gamma_j'=\mathop{\rm conv}\nolimits\{\ell'_j, \ell'_{j+1}\}$, which are equal to $\alpha_j=\angle P_{j}'O'P_{j+1}'$. The value of the inner dihedral angle with the edge $\ell'_j$ we define by~$\beta_j$.
Here and below we assume that indices are equivalent to each other with respect to congruence modulo $n$, i.e. $n+1=1$, $n+2=2$, \ldots.  

\begin{figure}[ht!]
    \centering
    \includegraphics[width=0.45\textwidth]{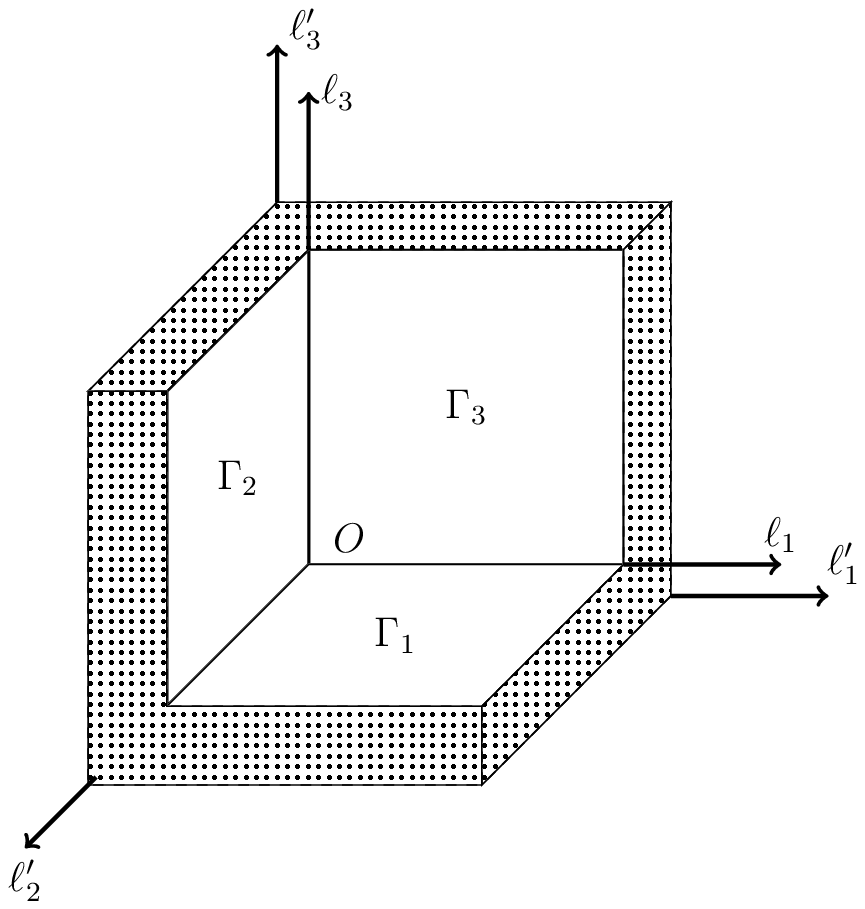}  \qquad
    \includegraphics[width=0.45\textwidth]{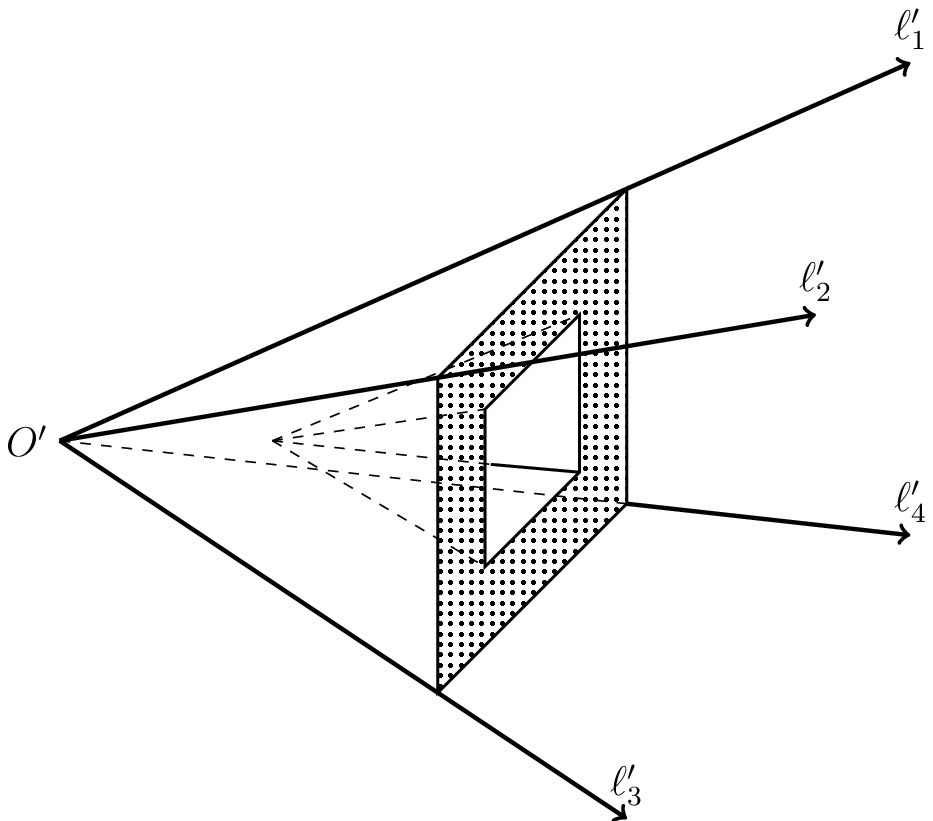}  
    \caption{Layers built on regular trihedral and tetrahedral angles}
    \label{fig-01}
\end{figure}

The polyhedral layer of unit width is defined by the formula 
$$
\Pi=\{x_\dagger\in \Upsilon' \colon \mathop{\rm dist}\nolimits(x, \Gamma')\in (0,1)\}.
$$ 
The boundary $\partial \Pi$ consists of two polyhedral surfaces: the outer one coincides with the~ $\Gamma'$ and the inner one we denote by $\Gamma$. If a ball can be inscribed into the angle $\Upsilon'$, then $\Gamma'$ and $\Gamma$ can be matched by shifting. Otherwise, the inner polyhedral surface $\Gamma$ may be more complex, but in this work we consider only the first of these situations, which occurs, for instance, in the case of any trihedral angle, as well as in the case of a regular polyhedral angle (an angle with equal vertex angles and equal dihedral angles). For elements of $\Gamma$, corresponding to elements $\Gamma'$, we use the same symbols, but removing the stroke.

We associate a rectangular Cartesian coordinate system $(x_j, y_j, z_j)$ with each dihedral angle of the polyhedral angle $\Upsilon$ defined by its origin at point $O$ and an orthonormal basis $({\bf e}_j^1, {\bf e}_j^2, {\bf e}_j^3)$, where the latter vector of the triple is directed along the ray $\ell_j$, the first of the triple lies within one of the faces, and the second is directed into the half-space containing the angle.

Below we need one natural decomposition of $\Pi$. For each vertex angle $\Gamma_j$, we will draw a bisector and construct a plane $\pi_j$ passing through this bisector, perpendicular to the plane $\Gamma_j$. The region cut out from the layer $\Pi$ by two such planes $\pi_{j-1}$ and $\pi_j$ is denoted by $\varpi_j$. So, the closure of layer $\Pi$ is represented as a union
\begin{equation} \label{partition}
    \overline{\Pi} = \bigcup\limits_{j=1}^n \overline{\varpi_j}.
\end{equation}

\begin{figure}[ht!]
    \centering
    \includegraphics[width=0.4\textwidth]{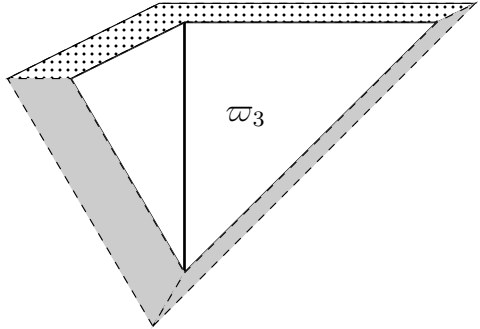}  
    \caption{The set $\varpi_3$ for the trihedral layer $\Pi$ on  Fig. \ref{fig-01}}
\end{figure}

Let us note that the sections of $\varpi_j$, which are orthogonal to the axes $Oz_j$ for $z_j > 0$, are truncated broken waveguides of unit width with inner angle magnitude equals $\beta_j$. The  coordinate system $z_j^\perp = (x_j, y_j)$, whose origin coincides with the inner vertex, corresponds to this section. 

\subsection{Spectral problem and preliminary information about the spectrum}
In this work we study the spectral problem of the Laplace operator
\begin{equation}
    \label{problem}
    -\Delta u(x)=\Lambda u(x), \quad x\in \Pi; \quad u(x)=0,\quad x\in \partial\Pi
\end{equation}
in various domains $\Pi$. The problem \eqref{problem} admits a variational formulation
$$
\mathfrak{a}_\Pi[u,v]:=(\nabla u, \nabla v)_\Pi=\Lambda (u,v)_\Pi \quad \forall v\in \lefteqn{\overset\circ{\hphantom{H'}\vphantom{\prime}}}H\vphantom{H}^1(\Pi),
$$
where $(u,v)_{\Pi}$ denotes the standard scalar product in $L_2(\Pi)$. The closed positive definite sesquilinear form $\mathfrak{a}_\Pi$ on the Sobolev space of functions with zero trace on the boundary $\partial \Pi$ in $\lefteqn{\overset\circ{\hphantom{H'}\vphantom{\prime}}}H\vphantom{H}^1(\Pi)$ generates a positive self-adjoint operator $\mathfrak{A}_\Pi$. The structure of the spectrum of this operator in the domains from Sec. \ref{sub-sec-2.1} is the main subject of study of this work.

As will be shown in Theorem \ref{ess_sp_th}, the essential spectrum of the operator $\sigma_{ess}(\mathfrak{A}_\Pi)$ occupies the ray $[\Lambda_\dagger, +\infty)$ with a cut-off $\Lambda_\dagger=\Lambda_\dagger(\Pi)$ which coincides with the first eigenvalue in the ${\sf L}$-shaped waveguide with the inner vertex angle equals to the smallest of the dihedral angles $\beta_j$.

The eigenvalues of the operator $\mathfrak{A}_\Pi$ that are located below the threshold $\Lambda_\dagger$ are elements of the discrete spectrum $\sigma_d(\mathfrak{A}_\Pi)$ and can be found using the max-min principle (see \cite[Theorem 10.2.2]{BiSo80}).
\[
\Lambda_j(\Pi)\ \ =\sup\limits_{\mathop{\rm codim}\nolimits E=j-1}\ \inf\limits_{u\in E\setminus{\{\vec{0}\}}}\ 
\frac{\|\nabla u;L_2(\Pi)\|^2}{\|u;L_2(\Pi)\|^2},
\] 
where the supremum is taken over all subspaces $E$ of codimension $j-1$ in the space $\lefteqn{\overset\circ{\hphantom{H'}\vphantom{\prime}}}H\vphantom{H}^1(\Pi)$. 
In particular, the existence of a function $w\in\lefteqn{\overset\circ{\hphantom{H'}\vphantom{\prime}}}H\vphantom{H}^1(\Pi)$ such that
 \[
 \|\nabla w;L_2(\Pi)\|^2<\Lambda_\dagger\|w;L_2(\Pi)\|^2,
 \]
is equivalent to the existence of points of the discrete spectrum below the threshold. As for the first eigenvalue $\Lambda_1$ of the Dirichlet problem if it exists, it is known to be simple, and the corresponding eigenfunction can be chosen positive.

Theorem \ref{Finitness_of_discrete_sp} states that the discrete spectrum, if exists, is finite and lies below the threshold. For a regular polyhedral angular layer the discrete spectrum is always non-empty (see theorem \ref{ExistDiscrSp}). Theorem \ref{abs_of_ds} gives an example of an polyhedral angular layer with an empty discrete spectrum.

\section{Spectra of ${\sf L}$-shaped waveguides}
\label{sec-3}

Below we will need some properties of planar waveguides formed by a broken at an angle $\theta\in (0,\pi)$ strip of unit width (see Fig.~\ref{fig-05}). Let us define them in a similar way to the section \ref{sub-sec-2.1}. Consider two rays $\ell_1'$ and $\ell_2'$ on a plane with the vertex  $O'$ and angle $\theta$ between them. Their union $\gamma'=\ell_1'\cup \ell_2'$ forms an exterior part of the waveguide boundary. Define $\omega(\theta)$ as a set of all points inside the angle located at a distance less than one from $\gamma'$. The boundary of such waveguide is a disjunctive union $\partial \omega(\theta) = \gamma(\theta)\cup \gamma'(\theta)$ of the ``interior'' and ``exterior'' parts. The vertex of the interior angle we denote by $O$.

\begin{figure}[ht!]
    \centering
    \includegraphics[width=0.6\textwidth]{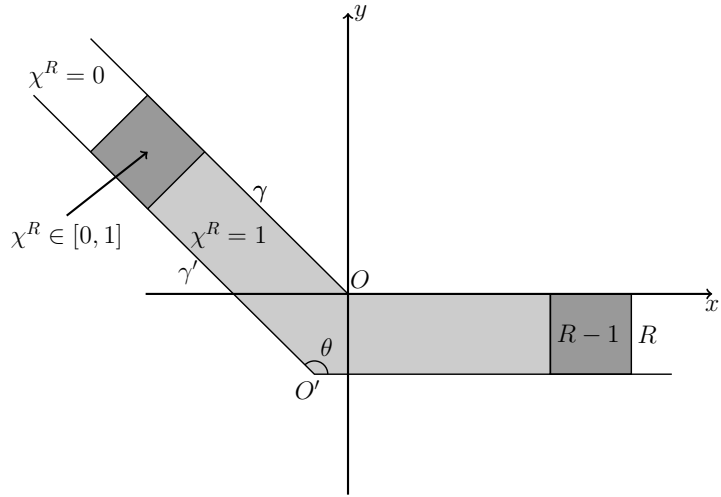}
    \caption{{\sf L}-shaped waveguide $\omega(\theta)$, truncated waveguides  $\omega^{R-1}(\theta)$ и $\omega^R(\theta)$, the support of the function $\chi^R$}
    \label{fig-05}
\end{figure}

Moreover, let us define the truncated waveguide $\omega^R(\theta)$ as a bounded part of $\omega(\theta)$ with the outlets of length $R$ (see. Fig. \ref{fig-05}). The lengths of the outlets of the truncated waveguide $\omega^R$ are measured from the vertex $O$ of the interior angle $\gamma(\theta)$ to the cross-sections perpendicular to the outlets axis.

The following properties of the Dirichlet spectral problem for the Laplace operator in the domains $\omega(\theta)$ are valid.

\begin{proposition} \label{WaveguidePropertiesProp}
\begin{itemize}
\item[(i)] For all $\theta\in (0,\pi)$ the essential spectrum coincides with a ray $[\pi^2,+\infty)$. 

\item[(ii)] For all $\theta\in (0,\pi)$ the discrete spectrum is nonempty. 

\item[(iii)] For all $\theta\in (2 \arctan(\sqrt 3 / 4), \pi)\supset (\pi/2, \pi)$ there is unique eigenvalue $\lambda_1(\omega(\theta))$ below the threshold of the continuous spectrum.

\item[(iv)] 
For all  $\theta\in(0,\pi)$ there exists such $C_\theta>0$ that for all $R>1$ for the first eigenvalue $\lambda_1(\omega^R(\theta))$ of the Laplacian in the truncated waveguide $\omega^R(\theta)$ with the Dirichlet conditions on the boundary $\partial\omega^R(\theta)\cap \partial\omega(\theta)$ and the Neumann conditions on the cross-sections $\partial\omega^R(\theta)\setminus \partial\omega(\theta)$ the following estimate holds 
\[
\lambda_1(\omega(\theta))-C_\theta e^{-2\nu R}\leq
\lambda_1(\omega^R(\theta))\leq
\lambda_1(\omega(\theta)),
\]
with some positive $\nu$.
Moreover, the corresponding eigenfunction $v_1(\omega(\theta))$
decays exponentially along the outlet axes and the following estimate holds 
\[
\|v_1(\omega(\theta));\, H^1(\omega(\theta)\setminus\omega^R(\theta))\| \leq C_\theta e^{-\nu R/2}.
\]

\item[(v)] 
For any integer $k$ there exists a number $\theta_k > 0$ such that when $\theta < \theta_k$, there are at least $k$ eigenvalues below the threshold.

\item[(vi)] 
The first eigenvalue $\lambda_1(\omega(\theta))$ is a monotonically increasing function on $(0,\pi)$, while $\lambda_1(\omega(\theta))\to \pi^2$ as $\theta\to \pi$ and $\lambda_1(\omega(\theta)) \to \pi^2/4$ for $\theta\to 0$.
\end{itemize}
\end{proposition}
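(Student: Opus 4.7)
Parts (i)--(ii) and (iv)--(vi) are established by now-standard techniques for bent Dirichlet strips: the min-max principle, Persson's formula for the bottom of the essential spectrum, and Agmon-type exponential decay below the continuous threshold. Part (iii) is the most delicate and is available in the existing literature; I would simply quote it.

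\textbf{Essential spectrum and existence of a bound state: (i), (ii).} For (i), Persson's formula gives
\[
\inf\sigma_{\mathrm{ess}}(\omega(\theta))=\lim_{R\to\infty}\inf_{u\in H_0^1(\omega(\theta)\setminus B_R),\,u\ne 0}\frac{\|\nabla u\|^2}{\|u\|^2};
\]
outside a large ball the domain splits into two disjoint unit-width half-strips, so the limit equals $\pi^2$, and $\inf\sigma_{\mathrm{ess}}=\pi^2$. The reverse inclusion $[\pi^2,+\infty)\subset\sigma_{\mathrm{ess}}$ follows from cut-off translates of the separated-variables solution $\sin(\pi y)e^{i\xi x}$ along one outlet, which give singular Weyl sequences for every $\Lambda=\pi^2+\xi^2$. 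For (ii) the classical bent-waveguide trial function $\chi(s)\sin(\pi t)$ in adapted coordinates near the bend has Rayleigh quotient strictly below $\pi^2$: the non-orthogonality of the coordinate frame contributes a negative correction of the form $-c(\pi-\theta)+o(\pi-\theta)$ after a careful expansion (cf.~\cite{DuExKr01}).

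\textbf{Decay, multiplicity, limits: (iv), (v), (vi).} Item (iv) rests on Agmon's inequality with weight $e^{\nu s}$ on each outlet: for $\nu<\sqrt{\pi^2-\lambda_1(\omega(\theta))}$ one has $\|e^{\nu s}v_1\|_{L^2(\omega(\theta))}<\infty$, from which the claimed $H^1$-bound on $\omega(\theta)\setminus\omega^R(\theta)$ follows by local elliptic regularity. The upper bound $\lambda_1(\omega^R)\le\lambda_1(\omega)$ is obtained by using $v_1(\omega)|_{\omega^R}$ as a test function for the Dirichlet--Neumann problem and noting that, since $\lambda_1(\omega)$ is the total Rayleigh quotient of $v_1$, the quotient on the discarded tail strictly exceeds $\lambda_1(\omega)$ (expand $v_1$ in the transverse Dirichlet basis and observe that the leading exponential mode has quotient $(\pi^2+\mu^2)$ with $\mu^2=\pi^2-\lambda_1$), forcing the quotient on $\omega^R$ to be below $\lambda_1(\omega)$. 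The matching lower bound is obtained by plugging a cut-off $\chi v_1(\omega)$ supported in $\omega^{R-1}$ into the Rayleigh quotient on $\omega$ and controlling the commutator $[\Delta,\chi]v_1$ by the exponential decay. For (v), when $\theta$ is small the region $\omega(\theta)$ contains a rectangle of width $w\to 2$ and length $\ell\to\infty$ near the bend; the Dirichlet eigenvalues $\pi^2/w^2+(j\pi/\ell)^2$, $j=1,\dots,k$, of this rectangle all lie strictly below $\pi^2$ once $\theta$ is sufficiently small, so extending the $k$ mutually orthogonal rectangle eigenfunctions by zero and invoking the min-max principle produces at least $k$ points of the discrete spectrum. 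For (vi), the two limits come from the degeneration of $\omega(\theta)$ to a straight strip of width $1$ as $\theta\to\pi$ (giving threshold $\pi^2$) and to a strip of width $2$ as $\theta\to 0$ (giving $\pi^2/4$); monotonicity in $\theta$ is a standard comparison argument via the bending coordinates and the simplicity of $\lambda_1$.

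\textbf{The hard part: (iii).} Item (iii) is the only genuinely nontrivial assertion. It requires a careful estimate of the second min-max value $\Lambda_2(\omega(\theta))$, typically via the symmetric/antisymmetric decomposition across the bisector, showing that the antisymmetric component carries no bound state in the stated range of $\theta$. Rather than reproving this, I would cite the detailed arguments for ${\sf L}$-shaped waveguides in~\cite{NaSh14} together with the monograph~\cite{ExKo15}; this is the main technical obstacle for the proposition, but it is auxiliary to the main results of the present paper.
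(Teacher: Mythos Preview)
Your treatment largely parallels the paper's: items (i)--(iii), (v), and (vi) are handled there by citation as well, with only (iv) given an argument in the text. Two points need correction.

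\textbf{On (iii).} The explicit angular range $\theta>2\arctan(\sqrt{3}/4)$ is not contained in \cite{NaSh14} or \cite{ExKo15}; those sources give uniqueness only for a smaller range (essentially obtuse angles). The sharper bound is due to Pankrashkin \cite{Pa17}, and the paper cites it for exactly this reason. Your citation would not cover the full statement.

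\textbf{On (iv), the lower bound.} Your sketch for $\lambda_1(\omega^R)\ge\lambda_1(\omega)-C_\theta e^{-2\nu R}$ does not work as written. Plugging the cut-off $\chi v_1(\omega)$ into the Rayleigh quotient on $\omega$ yields only the vacuous inequality $\lambda_1(\omega)\le\lambda_1(\omega)+O(e^{-2\nu R})$; plugging it into the Rayleigh quotient on $\omega^R$ gives another \emph{upper} bound on $\lambda_1(\omega^R)$, not the needed lower one. Trial functions in the min--max principle only ever produce upper bounds on eigenvalues, so a different mechanism is required. The paper proceeds via the exact identity
\[
\lambda_1(\omega(\theta))-\lambda_1(\omega^R(\theta))
=\frac{(v_1^R,\,[\Delta,\chi^R]v_1)_{\omega^R(\theta)}}{(v_1^R,\,\chi^R v_1)_{\omega^R(\theta)}},
\]
controls the numerator by the exponential decay of $v_1$ on the support of $\nabla\chi^R$, and then shows the denominator is bounded away from zero by decomposing $\chi^R v_1=A_R v_1^R+B_R w^R$ orthogonally and invoking the monotonicity of $\lambda_k(\omega^R(\theta))$ in $R$ together with the simplicity of $\lambda_1(\omega(\theta))$ to separate $\lim_R\lambda_2(\omega^R)$ from $\lambda_1(\omega)$. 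Your Agmon-decay part of (iv) and your upper bound $\lambda_1(\omega^R)\le\lambda_1(\omega)$ via the tail--restriction argument are fine; it is specifically the lower bound that needs this extra idea.
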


The monograph \cite{ExKo15} contains a proof of items (i)-(iii) without exact estimates for~$\theta$. Estimates of the angle at which the eigenvalue is unique are obtained in \cite{Pa17}. The proof of item (iv) for rectangular waveguides is contained in \cite{BaMaNa} and can be easily adapted to the case of an arbitrary angular waveguide, see the note below. Item (v) of Proposition \ref{WaveguidePropertiesProp} and the first statement in item (vi) are proven in \cite{AvBeGiMa91}. The proof of the second statement of the last item of the proposition is given in \cite{DaRa12}.

\begin{remark}
\emph{Proof of item (iv) for an arbitrary angular waveguide.} 
The main difference from the proof presented in \cite{BaMaNa} is that below the threshold the discrete spectrum of an ${\sf L}$-shaped waveguide with a sufficiently sharp angle may consist of several eigenvalues (see item (v) of Proposition \ref{WaveguidePropertiesProp}):
\[
\lambda_1(\omega(\theta)) < \lambda_2(\omega(\theta))\leq\ldots\leq\lambda_\kappa(\omega(\theta)) \leq \pi^2=\lambda_\dagger.
\]
For the first eigenvalues the identity holds
\begin{equation*}
    \lambda_1(\omega(\theta)) - \lambda_1^R(\omega(\theta)) = 
    \frac{(v_1^R, [\Delta, \chi^R]v_1)_{\omega^R(\theta)}}{(v_1^R, \chi^R v_1)_{\omega^R(\theta)}},
\end{equation*}
where $v_1$, $v_1^R$ are the normed eigenfunctions corresponding to  $\lambda_1(\omega(\theta))$ and $\lambda_1^R(\omega(\theta))$, respectively, and $[\Delta, \chi^R]$ is the commutator of the Laplacian and the smooth cut-off function, which equals to 1 in $\omega^{R-1}(\theta)$ and equals to 0 in $\omega(\theta)\setminus\omega^R(\theta)$ (see Fig. \ref{fig-05}).
The method of separating variables guarantees exponential decay (with an exponent of $\sqrt{\pi^2-\lambda_1(\omega(\theta))}$) of the eigenfunction $v_1$ together with its gradient in the outlet of the infinite waveguide, so the numerator of the fraction in the right part is exponentially small, and it remains to show that the denominator is separated away from zero.

The denominator $A_R=(v_1^R, \chi^Rv_1)_{\omega^R(\theta)}$ equal to the coefficient in the decom\-position of $\chi^Rv_1$ into the orthogonal sum 
\begin{equation*}
    \chi^Rv_1 = A_Rv_1^R + B_Rw^R,
\end{equation*}
where $\|w^R; L_2(\omega^R(\theta))\|=1$ and $(v_1^R, w^R)_{\omega^R(\theta)}=0$.
The exponential decay of the eigenfunction and its gradient in the infinite waveguide implies the estimates
\begin{equation} \label{norm_chi_v_1}
    |A_R|^2 + |B_R|^2 = \|\chi_Rv_1; L_2(\omega^R(\theta)\|^2 = 1 - O(e^{-2\sqrt{\pi^2-\lambda_1(\omega(\theta))} R})
\end{equation}
and
\begin{equation} \label{nabla_v_1}
    \|\nabla v_1; L_2(\omega^R(\theta))\|^2=\lambda_1(\omega(\theta)) + O(e^{-2\sqrt{\pi^2-\lambda_1(\omega(\theta))} R}).
\end{equation}
Applying the max-min principle (see \cite[Theorem 10.2.2]{BiSo80}) to the function $w^R$ we obtain 
\begin{equation} \label{nabla_chi_v_1}
    \|\nabla(\chi^Rv_1); L_2(\omega^R(\theta))\|^2\geq|A_R|^2\lambda_1(\omega^R(\theta)) + |B_R|^2\lambda_2(\omega^R(\theta)).
\end{equation}
According to \cite[Lemma 2.1]{BaNa21}, the functions $\lambda_1(\omega^R(\theta))$ and $\lambda_2(\omega^R(\theta))$ monotonically increase with $R$. From the proof in \cite[Proposition 2]{BaNa21} it follows that $\lambda_1(\omega^R(\theta)) \to \lambda_1(\omega(\theta))$ as $R\to+\infty$, and the limit of $\lambda_2(\omega^R(\theta))$ is either equals  $\lambda_2(\omega(\theta))$, or coincides with the threshold $\lambda_\dagger$, or lies above the threshold. The first eigenvalue $\lambda_1(\omega(\theta))$ of the Dirichlet problem is simple, so in any of the above cases the limits of the first two eigenvalues of the truncated waveguides cannot coincide. Thus, from the equalities~\eqref{norm_chi_v_1} and \eqref{nabla_v_1} and inequality~\eqref{nabla_chi_v_1} it follows that $|A_R|$ is equal to 1 up to an exponentially small correction, which proves that the denominator does not vanish for sufficiently large values of $R$. See \cite{BaMaNa} for details.
\end{remark}

\section{Structure of the spectrum} \label{sec-4}

\begin{proposition} \label{Cont_Sp}
The following statement holds
$$
\sigma_{ess}(\mathfrak{A}_\Pi)\supset [\Lambda_\dagger, +\infty), 
$$
where $\Lambda_\dagger$ coincides with the first eigenvalue $\lambda_1(\omega(\beta_\dagger))$ of the Dirichlet spectral problem in a planar broken waveguide $\omega(\beta_\dagger)$ with the angle $\beta_\dagger=\min\limits_{1\leq j\leq n} \beta_j$. 
\end{proposition}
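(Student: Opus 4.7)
The plan is to verify the inclusion via Weyl's criterion: for every $\Lambda \geq \Lambda_\dagger$ I will exhibit a singular sequence $\{u_k\} \subset H^1_0(\Pi)$ with $\|u_k\|_{L_2(\Pi)}=1$, $u_k \rightharpoonup 0$, and $\|(\mathfrak{A}_\Pi-\Lambda)u_k\|_{H^{-1}(\Pi)} \to 0$. Write $\Lambda = \Lambda_\dagger + \xi^2$ with $\xi \geq 0$, fix an index $j_\star$ with $\beta_{j_\star}=\beta_\dagger$, and let $v_1$ denote the first Dirichlet eigenfunction of the planar waveguide $\omega(\beta_\dagger)$, normalised in $L_2$. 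As noted at the end of Section~\ref{sub-sec-2.1}, the slice $\varpi_{j_\star}\cap\{z_{j_\star}=t\}$ for $t>0$ is a truncated L-shaped waveguide of opening angle $\beta_\dagger$ whose dimensions grow linearly with $t$; so for every prescribed $R$, once $t$ is sufficiently large, the slice contains an isometric copy of $\omega^R(\beta_\dagger)$ inscribed at the inner edge. This product-like behaviour along the sharpest edge is what drives the essential spectrum down to $\Lambda_\dagger$.

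Pick sequences $R_k, T_k \to \infty$ with $R_k/T_k \to 0$ and a longitudinal bump $\phi_k \in C_c^\infty([T_k,2T_k])$ with $\|\phi_k\|_{L_2(\mathbb{R})}=1$ and $\|\phi_k'\|_{L_2}+\|\phi_k''\|_{L_2}\to 0$ (e.g.\ $\phi_k(t)=T_k^{-1/2}\phi_0((t-T_k)/T_k)$ for a fixed $\phi_0$). Define
\[
u_k(x) \;=\; \phi_k(z_{j_\star})\, e^{i\xi z_{j_\star}}\, \chi^{R_k}(z_{j_\star}^\perp)\, v_1(z_{j_\star}^\perp),
\]
where $\chi^{R_k}$ is the cut-off from Section~\ref{sec-3}, equal to $1$ on $\omega^{R_k-1}(\beta_\dagger)$ and vanishing outside $\omega^{R_k}(\beta_\dagger)$. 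By the geometric remark above, $u_k \in H^1_0(\Pi)$ for $k$ large. Splitting $-\Delta = -\partial_{z_{j_\star}}^2 - \Delta_{z_{j_\star}^\perp}$ and using $(-\Delta_\perp)v_1 = \Lambda_\dagger v_1$ gives
\[
(-\Delta-\Lambda)u_k \;=\; e^{i\xi z_{j_\star}}\bigl(-\phi_k''-2i\xi\phi_k'\bigr)\chi^{R_k}v_1 \;-\; e^{i\xi z_{j_\star}}\phi_k\,[\Delta_\perp,\chi^{R_k}]v_1.
\]
The first summand is small by the slow variation of $\phi_k$, and the second is small because Proposition~\ref{WaveguidePropertiesProp}(iv) yields the exponential estimate $\|v_1;H^1(\omega(\beta_\dagger)\setminus\omega^{R_k-1}(\beta_\dagger))\|=O(e^{-\nu R_k/2})$ on the annular region where the commutator is supported. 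The same exponential decay together with $\|v_1\|_{L_2(\omega)}=1$ and $\|\phi_k\|_{L_2}=1$ yields $\|u_k\|_{L_2(\Pi)}\to 1$, while $T_k\to\infty$ pushes the supports out along the edge, forcing $u_k\rightharpoonup 0$. Assembling these estimates produces the required singular Weyl sequence and hence $\Lambda\in\sigma_{ess}(\mathfrak{A}_\Pi)$.

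The step demanding the most care is the geometric claim that $\varpi_{j_\star}\cap\{z_{j_\star}=t\}$ contains a full copy of $\omega^{R}(\beta_\dagger)$ for $t\gtrsim R$. This amounts to checking that the bisector planes $\pi_{j_\star-1}$ and $\pi_{j_\star}$, which bound $\varpi_{j_\star}$ laterally, recede from the edge $Oz_{j_\star}$ at a strictly positive linear rate in $z_{j_\star}$, so that both legs of the L-shape fit inside the slice at sufficient height. The inscribed-sphere hypothesis on $\Upsilon'$ (Section~\ref{sub-sec-2.1}) ensures this after a short calculation. Everything else reduces to routine product-geometry bookkeeping combined with the exponential-decay input of Proposition~\ref{WaveguidePropertiesProp}(iv).
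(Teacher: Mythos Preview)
Your proof is correct and follows essentially the same route as the paper: a Weyl sequence built along the sharpest edge $\ell_{j_\star}$ from the product of the first Dirichlet eigenfunction $v_1$ of $\omega(\beta_\dagger)$, a longitudinal plane wave $e^{i\xi z_{j_\star}}$, a slowly varying longitudinal cut-off, and the transverse truncation $\chi^{R_k}$, with the commutator term controlled by the exponential decay of $v_1$ from Proposition~\ref{WaveguidePropertiesProp}(iv). The only cosmetic differences are that the paper uses dyadic windows $[2^n,2^{n+1}]$ with normaliser $2^{-n/2}$ in place of your rescaled bump $\phi_k$, and it phrases the criterion with the $L_2$-norm of $(-\Delta-\Lambda)\Psi_n$ rather than the $H^{-1}$-norm (your smooth compactly supported $u_k$ lie in the operator domain, so your computation in fact yields the stronger $L_2$ bound anyway).
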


\begin{proof}
For all $\kappa \geq 0$ we construct the Weil sequence for the operator $\mathfrak{A}$ at the point $\lambda_1(\omega(\beta_\dagger)) + \kappa^2$. Denote $v_\dagger$ the non-negative eigenfunction in the waveguide  
$\omega(\beta_\dagger)$, corresponding to the eigenvalue $\lambda_1(\omega(\beta_\dagger))$ and normed in the space $L_2(\omega(\beta_\dagger))$. The layer $\Pi$ for all  $R>0$ contains arbitrarily long cylinders with sections $\omega^R(\beta_\dagger)$. We define the elements of the Weyl sequence in such cylinders by multiplying the eigenfunction $v_\dagger$ in their cross section by an appropriate imaginary exponent depending on the longitudinal coordinate, as well as by the necessary cutting functions. Let us describe the construction in more details.

Suppose $\beta_j=\beta_\dagger$. Consider a part of the layer $\varpi_j=\varpi_\dagger$ from the partition \eqref{partition}.
Associate with $\varpi_\dagger$ the Cartesian system $(x_\dagger, y_\dagger, z_\dagger)=(x_j,y_j,z_j)$. Its sections orthogonal to the $Oz_\dagger$ axis at $z_\dagger>0$ are truncated waveguides of unit width broken at an angle~$\beta_\dagger$. By analogy, we denote the corresponding coordinate system in these sections by $z_\dagger^\perp = (x_\dagger, y_\dagger)$.

Denote by $\mathcal{X}=\mathcal{X}(z)$ a smooth function that vanishes at $z\leq0$, equals one at $z\geq 1$, and satisfies the inequality $0\leq\mathcal{X }(z)\leq 1$. With its help we define cut-off functions along the $z$ axis:
$\mathcal{X}_n(z) = \mathcal{X}(z-2^n)\mathcal{X}(2^{n+1} - z)$.
In each section orthogonal to the $Oz_\dagger$ axis we define, as before, the cut-off function $\chi^{R(n)}=\chi^{R(n)}(x_\dagger, y_\dagger)$, where $R(n)$ is the length of the smallest of the outlets in a cross-section corresponding to the plane $z_\dagger = 2^n$. Note that $R(n)$ tends to infinity as $n$ grows. Finally, we define the sequence of cut-off functions
$\eta_n(x_\dagger,y_\dagger,z_\dagger)=\mathcal{X}_n(z_\dagger)\chi^{R(n)}(x_\dagger,y_\dagger)$.
The sequence of functions
\[
\Psi_n(x)=
\begin{cases}
2^{-n/2}v_\dagger(x_\dag,y_\dag)e^{i\kappa z_\dag}\eta_n(x_\dag,y_\dag,z_\dag),\qquad&(x_\dag,y_\dag,z_\dag)\in\varpi_\dag, \\
0,&(x_\dag,y_\dag,z_\dag)\in\Pi\setminus\varpi_\dagger
\end{cases}
\]
converges weakly to zero in $L_2(\varpi_\dagger)$, since their $L_2(\varpi_\dagger)$-norms do not exceed one and their supports do not intersect. Moreover, the Proposition \ref{WaveguidePropertiesProp} (iv) entails the estimate
\[
\|\Psi_n;\,L_2(\varpi_\dagger)\|^2\geq  1 - C_\dagger e^{-c_\dagger n},\qquad C_\dagger,\,c_\dagger > 0,
\]
from which it follows that the norms of $\Psi_n$ are separated away from zero.
The norm of discrepancy $$\|-\Delta \Psi_n - (\lambda_1(\omega(\beta_\dagger))+\kappa^2)\Psi_n;\,L_2(\varpi_\dagger)\|$$
can be estimated by the sum 
\begin{multline*}
2^{-n/2}  \left( \|\Delta_{z_\dagger^\perp}\eta_n v_\dagger;\,L_2(\varpi_\dagger)\|  +   
2\|\nabla_{z_\dagger^\perp}\eta_n \nabla_{z_\dagger^\perp} v_\dagger;\,L_2(\varpi_\dagger)\|\right)  + \\ + 2^{-n/2}\left( 
\|\partial^2_{z_\dagger}\eta_n v_\dagger;\,L_2(\varpi_\dagger)\| +  
2\kappa\|\partial_{z_\dagger} \eta_n  v_\dagger;\,L_2(\varpi_\dagger)\| \right).
\end{multline*}
Since in each section the supports of the functions $\Delta_{z_\dagger^\perp}\eta_n$ and $\nabla_{z_\dagger^\perp}\eta_n$ are enclosed inside $\omega^{R(n)}(\theta_ \dagger)\setminus\omega^{R(n)-1}(\theta_\dagger)$, while the $v_\dagger$ eigenfunction and its gradient decay exponentially along each outlet (see Proposition \ref{WaveguidePropertiesProp} (iv)), the first term decreases exponentially as $n$ increases.
The supports of the functions $\partial_{z_\dagger} \mathcal{X}_n$ and $\partial^2_{z_\dagger} \mathcal{X}_n$ contained in the union of two cylinders of the unit height:
$$\omega^{R(n)}(\theta_\dagger)\times\Big([2^n, 2^n+1]\cup [2^{n+1}-1, 2^{n+1}]\Big).$$
The function $v_\dagger$ is normalized in the space $L_2(\omega(\theta_\dagger))$, therefore, due to the exponential smallness of the normalizing factor $2^{-n/2}$, the second term in the discrepancy estimate also decreases exponentially with increasing of $n$. As a result, the $L_2(\varpi_\dagger)$-norm of the discrepancy tends to zero as $n$ increases.

Thus, the functions $\Psi_n$ form the Weil sequence for the number $\lambda_1(\omega(\beta_\dag))+\kappa^2$, which according to \cite[Theorem 9.1.2]{BiSo80} belongs to the essential spectrum.
\end{proof}

To prove the finiteness of the discrete spectrum, we need an auxiliary Hardy-type inequality.
\begin{lemma} 
For all functions $v\in H^1(1,+\infty)$ the inequality holds 
\[
\|\rho v;L_2(2,+\infty)\|^2\leq
4\|v';L_2(2,+\infty)\|^2+2\|v';L_2(1,2)\|^2+
2\|v,L_2(1,2)\|^2
\]
for $\rho(z)=z^{-1}$.
\end{lemma}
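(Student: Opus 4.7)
The plan is a one-dimensional Hardy-type calculation on $(2,+\infty)$ combined with a trace-style bound on the short interval $(1,2)$ to absorb the boundary term that appears after integration by parts.

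First I would exploit the antiderivative identity $\tfrac{d}{dz}(-1/z)=1/z^{2}$ on $(2,+\infty)$. Writing $I:=\|\rho v;L_2(2,+\infty)\|^{2}=\int_{2}^{\infty}|v(z)|^{2}z^{-2}\,dz$ and integrating by parts gives
\[
I=\frac{|v(2)|^{2}}{2}+2\,\mathrm{Re}\!\int_{2}^{\infty}\frac{v'(z)\,\overline{v(z)}}{z}\,dz,
\]
where the boundary term at $+\infty$ vanishes: for $v\in H^{1}(1,+\infty)$ the quantity $|v|^{2}$ is absolutely continuous with $L_1$ derivative $2\,\mathrm{Re}(v'\bar v)$, so $|v(z)|^{2}$ has a limit as $z\to+\infty$, and since $v\in L_2$ this limit must be zero. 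Next, applying Cauchy--Schwarz to the remainder and then Young's inequality $2\sqrt{AB}\le \tfrac12 B+2A$ with $A=\|v';L_2(2,+\infty)\|^{2}$ and $B=I$ yields
\[
I\le \frac{|v(2)|^{2}}{2}+\tfrac12 I+2\|v';L_2(2,+\infty)\|^{2},
\]
so $I\le |v(2)|^{2}+4\|v';L_2(2,+\infty)\|^{2}$.

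It remains to control $|v(2)|^{2}$ by the $H^{1}(1,2)$-norm of $v$. For any $t\in[1,2]$ the fundamental theorem of calculus gives $v(2)=v(t)+\int_{t}^{2}v'(s)\,ds$, so by Cauchy--Schwarz on an interval of length $\le 1$,
\[
|v(2)|^{2}\le 2|v(t)|^{2}+2\|v';L_2(1,2)\|^{2}.
\]
Averaging over $t\in[1,2]$ produces the trace estimate
\[
|v(2)|^{2}\le 2\|v;L_2(1,2)\|^{2}+2\|v';L_2(1,2)\|^{2},
\]
and inserting this into the previous bound yields exactly the claimed inequality.

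There is no serious obstacle: the only point that requires a moment of care is the justification that the boundary term at $+\infty$ vanishes for a general $H^{1}$ function (handled by the $L_1$-derivative argument above), and the need to pick the Young constant so that the factor $4$ in front of $\|v';L_2(2,+\infty)\|^{2}$ comes out sharp. Splitting the right-hand side as $(2,+\infty)$ plus a trace on the boundary interval $(1,2)$ automatically reproduces the asymmetry of the constants in the statement.
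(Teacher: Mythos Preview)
Your proof is correct and follows essentially the same route as the paper: a Hardy-type estimate on $(2,+\infty)$ yielding $I\le 4\|v'\|_{L_2(2,+\infty)}^{2}+|v(2)|^{2}$, followed by the same averaging trace bound for $|v(2)|^{2}$ on $(1,2)$. The only cosmetic difference is that the paper obtains the Hardy step by writing $|v(z)|^{2}=|v(2)|^{2}+2\int_{2}^{z}\mathrm{Re}(v\bar v')\,dt$ and swapping the order of integration via Tonelli, thereby sidestepping the boundary term at $+\infty$, whereas you integrate by parts directly and justify that term vanishes; both routes give the identical inequality $A^{2}\le 2AB+\tfrac12 C^{2}$.
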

\begin{proof}
Denote the left-hand side of the inequality by $A^2$ and, using the Newton--Leibniz formula, estimate it as follows
\begin{multline*}
A^2=
\int_2^{+\infty}z^{-2}\left|2 \int_2^z v(t)v'(t)dt + v^2(2)\right|dz\leq \\
\leq
2\int_2^{+\infty}z^{-2}\int_2^z|v(t)||v'(t)|dt\,dz +
\frac{1}{2}|v(2)|^2.
\end{multline*}
Changing the order of integration using the Tonelli theorem in the last integral and then applying the Cauchy--Swartz inequality, we conclude that
\[
A^2\leq
2AB +
\frac12 C^2,\quad  \mbox{где} \quad
B = \left(\int_2^{+\infty}|v'(t)|^2\right)^{\frac12},\, C = |v(2)|.
\]
Applying the arithmetic -- geometric mean inequality to the right hand side, we arrive at 
\[
A^2\leq \frac12A^2 + 2B^2 + \frac12 C^2,
\]
and hence
\begin{equation} \label{Almost_Hardy_enq} 
A^2\leq 4 B^2 + C^2.
\end{equation}
To estimate the last term $C^2=|v(2)|^2$, we write the Newton--Leibniz formula
\[
v(t)-v(s)=\int_s^tv'(z)\,dz
\]
for $t, s \in [1, 2]$. Integrating both parts of this equality with respect to the variable $s$ from 1 to 2, we obtain the relation
\begin{multline*}
v(t)=\int_1^2v(t)\,ds\leq\int_1^2|v(s)|\,ds+\int_1^2\int_s^t|v'(z)|\,dz\,ds\\
\leq\int_1^2|v(s)|\,ds+\int_1^2|v'(z)|\,dz\leq \|v;L_2(1,2)\|+\|v';L_2(1,2)\|.
\end{multline*}
In the last estimate, we used the Cauchy--Swartz inequality. As a result, we get that
\begin{equation} 
\label{pointwise_estimate}
|v(t)|^2\leq 2\|v';L_2(1,2)\|^2+2\|v;L_2(1,2)\|^2
\end{equation}
for $t\in[1,2]$. Combining together the inequalities \eqref{Almost_Hardy_enq} and \eqref{pointwise_estimate},
we arrive at the required assertion.
\end{proof}

We need a slightly more rough estimate from the following corollary.
\begin{corollary}
\label{Lem_almost_Hardy_enq_cor}
For any $v\in H^1(1,+\infty)$ and any $R_0\geq 2$ the inequality
\[
\|\rho v;L_2(R_0,+\infty)\|^2\leq
4\|v';L_2(1,+\infty)\|^2+
2\|v,L_2(1,R_0)\|^2
\]
for $\rho(z)=z^{-1}$.   
\end{corollary}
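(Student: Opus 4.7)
The plan is to adapt the argument of the preceding lemma, replacing the role of the endpoint $2$ by a general $R_0\geq 2$. First I would write the Newton--Leibniz identity $v^{2}(z)=v^{2}(R_{0})+2\int_{R_{0}}^{z}v(t)v'(t)\,dt$ for $z\geq R_{0}$, multiply by $z^{-2}$, integrate from $R_{0}$ to $+\infty$, and swap the order of integration by Tonelli to get
\[
A^{2}:=\|\rho v;L_{2}(R_{0},+\infty)\|^{2}\leq \frac{v^{2}(R_{0})}{R_{0}}+2\int_{R_{0}}^{+\infty}t^{-1}|v(t)||v'(t)|\,dt.
\]
Applying Cauchy--Schwarz to the last integral in the form $A\cdot\|v';L_{2}(R_{0},+\infty)\|$ and then the arithmetic--geometric mean inequality to absorb one factor of $A$ yields
\[
A^{2}\leq 4\|v';L_{2}(R_{0},+\infty)\|^{2}+\frac{2}{R_{0}}v^{2}(R_{0}),
\]
exactly as in the lemma. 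Since $R_{0}\geq 2$, the coefficient $2/R_{0}$ is bounded by $1$.

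Next I would produce a pointwise bound on $v^{2}(R_{0})$ by repeating the averaging trick of the lemma, but on the interval $[R_{0}-1,R_{0}]\subset[1,R_{0}]$ (which is legitimate precisely because $R_{0}\geq 2$). Writing $v(R_{0})=v(s)+\int_{s}^{R_{0}}v'(z)\,dz$, averaging $s$ over $[R_{0}-1,R_{0}]$, and using Cauchy--Schwarz gives
\[
|v(R_{0})|^{2}\leq 2\|v;L_{2}(R_{0}-1,R_{0})\|^{2}+2\|v';L_{2}(R_{0}-1,R_{0})\|^{2}.
\]
Enlarging the intervals of integration to $[1,R_{0}]$ and combining with the previous step produces
\[
A^{2}\leq 4\|v';L_{2}(R_{0},+\infty)\|^{2}+2\|v';L_{2}(1,R_{0})\|^{2}+2\|v;L_{2}(1,R_{0})\|^{2}.
\]

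Finally, I would observe that $4\|v';L_{2}(R_{0},+\infty)\|^{2}+2\|v';L_{2}(1,R_{0})\|^{2}\leq 4\|v';L_{2}(1,+\infty)\|^{2}$ (since $2\leq 4$), which collapses the derivative terms into the single clean term in the statement. Since each step is a direct adaptation of the preceding lemma, there is no serious obstacle; the only point requiring care is the bookkeeping of the constants $2$ and $4$, in particular exploiting $R_{0}\geq 2$ at exactly the right moment to make the factor $2/R_{0}\leq 1$ work and to guarantee that $R_{0}-1\geq 1$ when averaging.
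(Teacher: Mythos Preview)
Your argument is correct, but it does more work than the paper intends. In the paper the corollary is stated without proof precisely because it is an immediate weakening of the preceding lemma: since $R_{0}\geq 2$ one has $\|\rho v;L_{2}(R_{0},+\infty)\|^{2}\leq\|\rho v;L_{2}(2,+\infty)\|^{2}$ on the left, while on the right $4\|v';L_{2}(2,+\infty)\|^{2}+2\|v';L_{2}(1,2)\|^{2}\leq 4\|v';L_{2}(1,+\infty)\|^{2}$ and $2\|v;L_{2}(1,2)\|^{2}\leq 2\|v;L_{2}(1,R_{0})\|^{2}$. That is the whole proof.

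What you did instead is to re-run the entire Newton--Leibniz/Tonelli/Cauchy--Schwarz argument of the lemma with $R_{0}$ in place of $2$, and then average over $[R_{0}-1,R_{0}]$ rather than $[1,2]$. This is perfectly sound and in fact yields a slightly sharper intermediate inequality (your derivative term $4\|v';L_{2}(R_{0},+\infty)\|^{2}+2\|v';L_{2}(1,R_{0})\|^{2}$ is in general smaller than the paper's before you collapse it), but it is redundant once the lemma is in hand. The only real content of the corollary is the three monotonicity observations above.
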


\begin{theorem} \label{Finitness_of_discrete_sp}
\label{ess_sp_th}
The discrete spectrum of the operator $\mathfrak{A}_\Pi$ in the angle layer $\Pi$ is finite and lies below the threshold $\Lambda_\dagger$.
The essential spectrum of the operator $\mathfrak{A}_\Pi$ occupies the ray $[\Lambda_\dagger,+\infty)$.
\end{theorem}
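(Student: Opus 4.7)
Combined with Proposition~\ref{Cont_Sp}, it remains to prove that $\sigma_{ess}(\mathfrak{A}_\Pi)\subset[\Lambda_\dagger,+\infty)$ and that the eigenvalues strictly below $\Lambda_\dagger$, counted with multiplicities, form a finite set. My plan is to derive both conclusions at once from the form inequality
\begin{equation*}
\|\nabla u\|_{L_2(\Pi)}^2 \;\geq\; \Lambda_\dagger\,\|u\|_{L_2(\Pi)}^2 - C\,\|u\|_{L_2(K)}^2,\qquad u\in H^1_0(\Pi),
\end{equation*}
for some constant $C>0$ and some bounded subset $K\subset\Pi$. Indeed, since the embedding $H^1(\Pi)\hookrightarrow L_2(K)$ is compact, Glazman's lemma will then bound the number of spectral points of $\mathfrak{A}_\Pi$ in $(-\infty,\Lambda_\dagger)$ by the rank of a compact operator, while Weyl's theorem for form-compact perturbations yields $\inf\sigma_{ess}(\mathfrak{A}_\Pi)\geq\Lambda_\dagger$.

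To establish the form inequality I would use the partition~\eqref{partition}, splitting each $\varpi_j$ into a bounded part $\varpi_j^{<R_0}=\varpi_j\cap\{z_j<R_0\}$ and an unbounded tube $\varpi_j^{>R_0}=\varpi_j\cap\{z_j>R_0\}$, with $R_0\gg1$ to be chosen. On each cross-section $\{z_j=c\}\cap\varpi_j$ of the tube, the trace $u(\cdot,c)$ vanishes on the part of the boundary inherited from $\partial\Pi$ but is free on the cross-sections of the planes $\pi_{j-1},\pi_j$; hence it lies in the natural form domain of the truncated ${\sf L}$-shaped waveguide $\omega^{R(c)}(\beta_j)$, whose outlet length $R(c)$ grows linearly with $c$. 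Combining the variational principle in $\omega^{R(c)}(\beta_j)$, the gap estimate of Proposition~\ref{WaveguidePropertiesProp}(iv), and the monotonicity $\lambda_1(\omega(\beta_j))\geq\Lambda_\dagger$ of Proposition~\ref{WaveguidePropertiesProp}(vi), one obtains the pointwise-in-$z_j$ bound
\begin{equation*}
\int_{\{z_j=c\}}|\nabla_{z_j^\perp}u|^2 \;\geq\; \bigl(\Lambda_\dagger - C_j\,e^{-2\nu_j R(c)}\bigr)\int_{\{z_j=c\}}|u|^2,\qquad c>R_0.
\end{equation*}

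Integrating in $z_j$ and adding the longitudinal energy $\|\partial_{z_j}u\|_{\varpi_j^{>R_0}}^2$ then gives, on each tube, a lower bound of the form $\Lambda_\dagger\|u\|^2 + \|\partial_{z_j}u\|^2 - C_j\int_{R_0}^{+\infty}e^{-2\nu_j R(z)}\|u(\cdot,z)\|^2\,dz$. Since each $\varpi_j$ is essentially a truncated cone, $R(z)\geq\alpha_j z$ for $z$ large, so $e^{-2\nu_j R(z)}\leq M(R_0)/z^2$ with $M(R_0)\to0$ super-polynomially as $R_0\to+\infty$. Applying Corollary~\ref{Lem_almost_Hardy_enq_cor} fibre-wise in $z_j$ would then dominate the bad integral by $4C_jM(R_0)\,\|\partial_{z_j}u\|_{\varpi_j\cap\{z_j>1\}}^2 + 2C_jM(R_0)\,\|u\|_{\varpi_j\cap\{1<z_j<R_0\}}^2$; choosing $R_0$ so large that $4M(R_0)\max_jC_j<1$ allows the $\|\partial_{z_j}u\|^2$-term to be absorbed into the longitudinal energy already present on the right. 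On the bounded pieces $\varpi_j^{<R_0}$ the trivial bound $\|\nabla u\|^2\geq0$ suffices, and summing over $j$ yields the desired inequality with $K=\bigcup_j\varpi_j\cap\{z_j\leq R_0\}$. The main technical obstacle is precisely this absorption step: the constant $4$ in Corollary~\ref{Lem_almost_Hardy_enq_cor} is fixed, so one must exploit the super-polynomial smallness of $e^{-2\nu_j R(z)}$, which in turn rests on the linear growth $R(z)\sim z$ (a geometric input to verify uniformly for all pieces $\varpi_j$) and on the fibre-wise applicability of the Hardy estimate in a tube with $z$-dependent cross-section.
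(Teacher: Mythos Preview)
Your strategy mirrors the paper's (partition into $\varpi_j$, cross-sectional bound from the truncated waveguide, longitudinal Hardy estimate), but the step you flag as a ``technical obstacle'' is a genuine gap, not a verification detail. The bad integral $C_j\int_{R_0}^\infty e^{-2\nu_j R(z)}\|u(\cdot,z)\|^2\,dz$ involves the \emph{full} cross-section $\omega^{\ell(z)}(\beta_j)$, which grows with $z$. For a fibre through a point $(x_\dagger,y_\dagger)$ in the outlet part $\omega^{\ell(z)}\setminus\omega^{\ell(1)}$, the function $z\mapsto u(x_\dagger,y_\dagger,z)$ is defined only for $z\geq z_0(x_\dagger,y_\dagger)$, with $z_0$ unbounded along the outlet and no Dirichlet condition at $z_0$ (this is the cutting face $\pi_{j-1}$ or $\pi_j$). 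The Hardy boundary term for such a fibre therefore lands near $z_0$, not in $\{1<z_j<R_0\}$; your claimed bound is valid only for fibres inside the fixed cylinder $\omega^{\ell(1)}\times(1,+\infty)$. The paper supplies the missing ingredient: it pairs the truncated-waveguide bound \eqref{ReyleighQuotTruncWG} with the Friedrichs inequality \eqref{ReyleighQuotCoreWG} in the outlets (constant $\pi^2$, strictly larger than $\lambda_1(\omega)$), and a $z$-dependent convex combination of the two produces \eqref{MainSpatialEst}, whose weighted remainder $(4z)^{-2}\|u_z\|_{L_2(\omega^{\ell(1)})}^2$ lives on the \emph{fixed} section only, so Corollary~\ref{Lem_almost_Hardy_enq_cor} applies cleanly on the genuine cylinder $Q=\omega^{\ell(1)}\times(1,+\infty)$.

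A secondary issue: the bare form inequality $\|\nabla u\|^2\geq\Lambda_\dagger\|u\|^2-C\|u\|_{L_2(K)}^2$ gives $\inf\sigma_{ess}\geq\Lambda_\dagger$ via a relatively-compact-perturbation argument, but does \emph{not} by itself exclude eigenvalues accumulating at $\Lambda_\dagger$ from below (the ``compact operator'' you invoke has infinite rank). The paper does not stop at a form inequality: it reserves half of $\|\nabla u\|_{\varpi_\bullet}^2$ and imposes $L_2$-orthogonality to finitely many mixed Dirichlet--Neumann eigenfunctions on the bounded piece $\varpi_\bullet$, obtaining $\|\nabla u\|_\varpi^2\geq\lambda_1(\omega)\|u\|_\varpi^2$ \emph{exactly} on a finite-codimensional subspace, which yields finiteness directly via the max--min principle. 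Your argument could be upgraded the same way once the Hardy step is repaired, since on $\varpi_j^{<R_0}$ you use only the trivial bound and therefore have the full gradient energy in reserve.
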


\begin{proof} 
From the Proposition \ref{Cont_Sp} it follows that the discrete spectrum lies below the threshold. To prove its finiteness, we will use an analogue of the Dirichlet--Neumann bracketing (see \cite[XIII.15]{RS}), namely, cut the Dirichlet layer $\Pi$ into domains $\varpi_j$ (partition \eqref{partition}), preserving the Dirichlet condition on the boundary $\partial \Pi\cap \partial \varpi_j$ and assigning the Neumann condition on the flat boundaries of the cuts $\pi_j\cap \Pi$. This partition corresponds to the operator $\widetilde{\mathfrak{A}}_\Pi$ with quadratic form
$$
\widetilde{\mathfrak{a}}_\Pi(u,u)=\sum_{j=1}^n \|\nabla u; L_2(\varpi_j)\|^2,
$$
defined on the space $$
\mathcal{H}(\Pi) = 
\{u\in L^2(\Pi) \colon u|_{\varpi_j}\in H^1(\varpi_j) \mbox{ for } 1\leq j\leq n,  u|_{\partial \Pi}=0 \}.$$

According to the max-min principle (see 
 \cite[Theorem  XIII.2]{RS} or \cite[Theorem 10.2.2]{BiSo80}), the eigenvalues of the operator~$\widetilde{\mathfrak{A}}_\Pi$ do not exceed the corresponding eigenvalues of the operator~$\mathfrak{A}_\Pi$, and hence the finiteness of the discrete spectrum of the mixed boundary value problem entails the finiteness of the discrete spectrum of the operator~$\mathfrak{A}_\Pi$.

The operator $\widetilde{\mathfrak{A}}_\Pi$ is a direct sum of Laplacians with mixed boundary conditions on the partition elements, so it suffices to prove that, given on $\varpi_j$, the Laplace operator  with the Neumann conditions on the cuts $\partial \varpi_j\setminus (\Gamma\cup\Gamma ')$ and the Dirichlet conditions on the rest of the boundary has a finite number of eigenvalues below $\Lambda_\dagger$.

In what follows we will not write the index $j$. The assertion that the discrete spectrum is finite follows from the existence of such finite-dimensional space 
$E\subset L_2(\Pi)$, that for all $u$ orthogonal $E$ the following inequality holds

\begin{equation} \label{Cut_off_est_from_below}
    \frac{\|\nabla u;\ L_2(\varpi)\|^2}{\|u;\ L_2(\varpi)\|^2}\geq\Lambda_\dagger.
\end{equation}

\begin{figure}[ht]
    \includegraphics[width=0.5\textwidth]{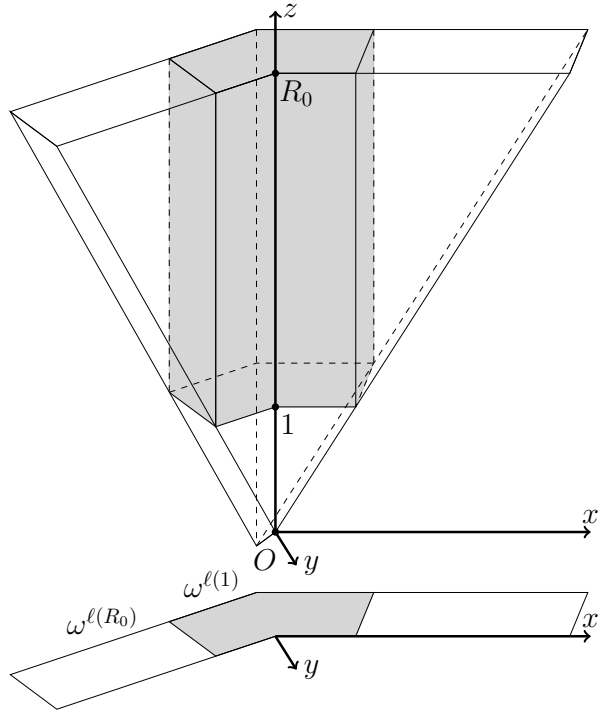}
    \caption{The set $\varpi_\bullet$ and the part of the cylinder $Q$ that fell into it in the case of a symmetric domain $\varpi$}
\end{figure}

Let us introduce some notation. We denote by $\ell(R)=c R$ the length of the smaller of the two arms in the cross-section $\varpi$ by the plane $z=R$. Denote by~$Q$ the cylinder $\omega^{\ell(1)}\times(1,+\infty)$ inscribed in $\varpi$. The set $\varpi$ is divided into two parts: the compact $\varpi_\bullet=\varpi\cap\{z\leq R_0\}$ and the unbounded $\widetilde{\varpi}=\varpi\cap\{z\geq R_0\}$ (parameter $R_0$ will be chosen later). For the function 
$u\in\mathcal{H}(\Pi)$
denote by $u_z(x,y) =u(x,y,z)$.

For $z\geq 0$ the inequality holds
\begin{equation} \label{ReyleighQuotTruncWG}
\|\nabla u_z;L_2(\omega^{\ell(z)})\|^2\geq \lambda_1(\omega^{\ell(z)})\|u_z;L_2(\omega^{\ell(z)})\|^2,
\end{equation}
and for $z\geq 1$ outside the cylinder $Q$, due to the Friedrichs inequality, we can write that 
\begin{equation} \label{ReyleighQuotCoreWG}
\|\nabla u_z;L_2(\omega^{\ell(z)})\|^2\geq\|\nabla u_z;L_2(\omega^{\ell(z)}\setminus\omega^{\ell(1)})\|^2\geq
\pi^2\|u_z;L_2(\omega^{\ell(z)}\setminus\omega^{\ell(1)})\|^2.
\end{equation}
Multiplying the inequality \eqref{ReyleighQuotTruncWG} by 
$(1-(4z\pi)^{-2})$
\[
(1-(4z\pi)^{-2})\|\nabla u_z;L_2(\omega^{\ell(z)})\|^2
\geq(1-(4z\pi)^{-2})\lambda_1(\omega^{\ell(z)})\|u_z;L_2(\omega^{\ell(z)})\|^2,
\]
and writing the inequality \eqref{ReyleighQuotCoreWG} as
\[
(4z\pi)^{-2}\|\nabla u_z;L_2(\omega^{\ell(z)}\setminus\omega^{\ell(1)})\|^2+
(4z)^{-2}\|u_z;L_2(\omega^{\ell(1)})\|^2\geq(4z)^{-2}\|u_z;L_2(\omega^{\ell(z)})\|^2,
\]
add the resulting estimates.
As a result, we have
\begin{multline} \label{MainSpatialEst}
\|\nabla u_z;L_2(\omega^{\ell(z)})\|^2+
(4z)^{-2}\|u_z;L_2(\omega^{\ell(1)})\|^2\geq\\ \geq
\Big((1-(4z\pi)^{-2})\lambda_1(\omega^{\ell(z)})+(4z)^{-2}\Big)\|u_z;L_2(\omega^{\ell(z)})\|^2.
\end{multline}
By virtue of Proposition \ref{WaveguidePropertiesProp} (iv) the coefficient on the right-hand side of \eqref{MainSpatialEst} satisfies the inequality
\[
(1-(4z\pi)^{-2})\lambda_1(\omega^{\ell(z)})+(4z)^{-2}\geq
\lambda_1(\omega)+
(4z)^{-2}(1-\lambda_1(\omega)\pi^{-2})-C e^{-\nu \ell(z)},
\]
and hence there exists $R_0\geq 2$ such that for $z\geq R_0$ the inequality holds
$$(1-(4z\pi)^{-2})\lambda_1(\omega^{\ell(z)})+(4z)^{-2} > \lambda_1(\omega),$$ 
and in the end
\begin{equation*}
\|\nabla u_z;L_2(\omega^{\ell(z)})\|^2+
(4z)^{-2}\|u_z;L_2(\omega^{\ell(1)})\|^2\geq
\lambda_1(\omega) \|u_z;L_2(\omega^{\ell(z)})\|^2.
\end{equation*}

Integrating the last estimate over $z\in[R_0,+\infty)$, we derive the inequality
\[
\|\nabla_{z^\perp} u; L_2(\widetilde \varpi)\|^2+
\frac{1}{16}\|\rho u;L_2(\widetilde \varpi\cap Q)\|^2\geq
\lambda_1(\omega)\|u;L_2(\widetilde \varpi)\|^2.
\]
Recall that, as before, $\rho(z)=z^{-1}$.
Applying the Corollary \ref{Lem_almost_Hardy_enq_cor} to the second term, we obtain the relation
\begin{multline} \label{LemmaEnq1}
\|\nabla_{z^\perp} u; L_2(\widetilde \varpi)\|^2+
\frac{1}{4}\|\partial_z u;L_2( Q)\|^2\geq\\
\geq\lambda_1(\omega)\|u;L_2(\widetilde \varpi)\|^2-
\frac18\|u;L_2(\varpi_\bullet\cap Q)\|^2.
\end{multline}
Note that by imposing the orthogonality conditions to the first eigenfunctions 
of the Laplace operator with the Dirichlet conditions on $\partial\varpi_\bullet\cap\partial\Pi$ and the Neumann conditions on the rest of the boundary
on the function $u$, for any $M>0$ one can achieve the inequality
\[
\|\nabla u; L_2(\varpi_\bullet)\|^2>M\|u;L_2(\varpi_\bullet)\|^2.
\]
Adding this inequality, divided by 2, with the inequality \eqref{LemmaEnq1}, we obtain the estimate
\begin{multline*}
\|\nabla_{z^\perp} u; L_2(\widetilde \varpi)\|^2+
\frac{1}{4}\|\partial_z u;L_2(\widetilde \varpi\cap Q)\|^2+\\
\frac14\|\partial_z u;L_2(\varpi_\bullet\cap Q)\|^2
+\frac12\|\nabla u;L_2(\varpi_\bullet)\|^2\geq\\
\lambda_1(\omega)\|u;L_2(\widetilde \varpi)\|^2-
\frac18\|u;L_2(\varpi_\bullet\cap Q)\|^2+
\frac M2\|u;L_2(\varpi_\bullet)\|^2.
\end{multline*}
If 
$M\geq2(\lambda_1(\omega)+1)$,
then
\[
\|\nabla u; L_2(\varpi)\|^2\geq\lambda_1(\omega)\|u;L_2(\varpi)\|^2,
\]
which gives an estimate for the threshold of the continuous spectrum.
One can choose the linear span of eigenfunctions from the orthogonality condition on $\varpi_\bullet$, extended by zero to $\varpi$, as the space $E$.

It follows from the Proposition \ref{Cont_Sp} that the ray $[\Lambda_\dagger,+\infty)$ is contained in the essential spectrum.
According to the max-min principle, 
the inequality \eqref{Cut_off_est_from_below}
holds true for all functions from  $\lefteqn{\overset\circ{\hphantom{H'}\vphantom{\prime}}}H\vphantom{H}^1(\Pi)$
that are orthogonal to the subspace $E$ in $L_2(\Pi)$ and hence it guarantees that $\Lambda_\dagger$ is a cutt-off point of the essential spectrum, and all eigenvalues below this point belong to the discrete spectrum. 
\end{proof}

\section{An example of the absence of a discrete spectrum} 
\label{sec-6}
Despite the fact that all the layers constructed by trihedral angles are structurally similar, it turns out that the discrete spectrum does not appear in all cases. A corresponding example is given in the following theorem.

\begin{theorem} \label{abs_of_ds}
In the layer $\Pi$, constructed from a trihedral angle with two right vertex angles at the vertex $O'$ (see Fig. \ref{fig-03} on the left), for sufficiently small values of the third vertex angle $\alpha$ the spectral Dirichlet problem for the Laplace operator has no eigenvalues below the threshold of the essential spectrum.
\end{theorem}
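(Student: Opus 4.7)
The plan is to establish the inequality $\|\nabla u\|^{2}_{L^{2}(\Pi)}\ge\Lambda_{\dagger}\|u\|^{2}_{L^{2}(\Pi)}$ for every $u\in H^{1}_{0}(\Pi)$, which by the max--min principle is equivalent to the absence of eigenvalues below $\Lambda_{\dagger}$. Orient coordinates so that the edge $\ell_{2}'$ carrying the small dihedral angle $\alpha=\beta_{\dagger}$ lies along the positive $z$-semiaxis, while $\ell_{1}',\ell_{3}'$ lie in the $xy$-plane and span a planar wedge $W$ of angle $\alpha$. With $d(x,y):=\mathop{\rm dist}\nolimits((x,y),\partial W)$ one has
$$
\Pi=\{(x,y,z):(x,y)\in W,\ z>0,\ \min(z,d(x,y))<1\},
$$
and the inner surface $\Gamma$ contains the horizontal lid $\{d\ge 1\}\times\{1\}$ together with the cylindrical wall $\{d=1\}\times[1,+\infty)$. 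Split $\Pi$ along $\{z=1\}$ into an upper half-cylinder $\Pi^{t}:=\omega(\alpha)\times(1,+\infty)$ and a lower slab $\Pi^{b}:=W\times(0,1)$, and split $\Pi^{b}$ further along $\{d=1\}$ into a core $\omega(\alpha)\times(0,1)$ and a tail $\Pi_{2}:=\{d\ge 1\}\times(0,1)$.

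The first two pieces are handled by elementary slice estimates. On $\Pi^{t}$ every horizontal slice $u(\cdot,z)$ belongs to $H^{1}_{0}(\omega(\alpha))$ since both components of $\partial\omega(\alpha)$ lie on $\partial\Pi$ for $z>1$; the min--max principle for $\Lambda_{\dagger}=\lambda_{1}(\omega(\alpha))$ yields $\|\nabla u\|^{2}_{\Pi^{t}}\ge\Lambda_{\dagger}\|u\|^{2}_{\Pi^{t}}$. On $\Pi_{2}$ the axial slice has Dirichlet data at both $z=0$ (face $F_{13}$) and $z=1$ (lid of $\Gamma$), so $\|\partial_{z}u\|^{2}_{\Pi_{2}}\ge\pi^{2}\|u\|^{2}_{\Pi_{2}}$ with surplus $(\pi^{2}-\Lambda_{\dagger})\|u\|^{2}_{\Pi_{2}}$. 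On the core $\omega(\alpha)\times(0,1)$, however, only Dirichlet--free axial Poincar\'e is directly available, giving $\|\partial_{z}u\|^{2}\ge(\pi^{2}/4)\|u\|^{2}$, which falls short of $\Lambda_{\dagger}\in(\pi^{2}/4,\pi^{2})$.

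To close the gap on the core, I would introduce the positive function $\Psi(x,y,z)=\phi(x,y)z$ on the half-cylinder $\omega(\alpha)\times(0,+\infty)$, where $\phi$ is the normalized first Dirichlet eigenfunction of $\omega(\alpha)$; it satisfies $-\Delta\Psi=\Lambda_{\dagger}\Psi$ and vanishes on the Dirichlet portion of the boundary. The ground-state substitution $u=\Psi w$ yields the Hardy-type identity
$$
\int_{\omega(\alpha)\times(0,1)}\!\!\bigl(|\nabla u|^{2}-\Lambda_{\dagger}|u|^{2}\bigr)=\int\Psi^{2}|\nabla w|^{2}\,dx\,dy\,dz+J_{z=1}+J_{\Sigma},
$$
in which the term $J_{z=1}$ on the upper face $\omega(\alpha)\times\{1\}$ reduces to the non-negative trace $\|u(\cdot,1)\|^{2}_{L^{2}(\omega(\alpha))}$ (using $\Psi=\partial_{z}\Psi=\phi$ there, and noting that the trace $u(\cdot,1)$ belongs to $H^{1}_{0}(\omega(\alpha))$ by continuity with the Dirichlet condition on $\{d\ge 1\}\times\{1\}\subset\Gamma$), while $J_{\Sigma}$ is the flux across the interior interface $\Sigma:=\{d=1\}\times(0,1)$, involving the trace $u|_{\Sigma}$ weighted by the Hopf derivative $\partial_{\nu}\phi<0$. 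The plan is to interpret $J_{\Sigma}$ as a limit from functions vanishing in an $\varepsilon$-neighborhood of $\Sigma$ and then to absorb it into the Poincar\'e excess $(\pi^{2}-\Lambda_{\dagger})\|u\|^{2}_{\Pi_{2}}$ of the tail via a trace estimate at $\Sigma$.

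The main obstacle is this interface/absorption step. Two ingredients save the day for $\alpha$ sufficiently small: Proposition~\ref{WaveguidePropertiesProp}(vi) gives $\Lambda_{\dagger}\to\pi^{2}/4$ as $\alpha\to 0$, so the deficit on the core vanishes; Proposition~\ref{WaveguidePropertiesProp}(iv) gives exponential decay of $\phi$ along the arms of $\omega(\alpha)$, controlling the $L^{2}(\Sigma)$-norm of the Hopf slope. Combined with an $H^{1/2}$-trace estimate, these should produce a bound of the form $|J_{\Sigma}|\le\varepsilon(\alpha)\,\bigl(\|u\|^{2}_{\Pi_{2}}+\|\nabla u\|^{2}_{\Pi_{2}}\bigr)$ with $\varepsilon(\alpha)\to 0$, allowing the tail surplus to dominate $J_{\Sigma}$ and close the argument.
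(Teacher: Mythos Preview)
Your decomposition $\Pi=\Pi^t\cup\Pi_2\cup(\text{core})$ and your treatment of $\Pi^t$ and of the tail $\Pi_2$ coincide exactly with the paper's pieces $\Pi_1,\Pi_2$; the difficulty is entirely in the core, and there your argument has a genuine gap.

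The ground-state substitution $u=\Psi w$ with $\Psi(x,y,z)=\phi(x,y)\,z$ is not admissible on the core, because $\phi$ vanishes on the inner boundary $\{d=1\}$ of $\omega(\alpha)$, hence $\Psi\equiv 0$ on the interface $\Sigma=\{d=1\}\times(0,1)$. A generic $u\in H^1_0(\Pi)$ has nonzero trace on $\Sigma$, so $w=u/\Psi$ blows up there and the identity you write simply does not hold: the boundary integrand on $\Sigma$ is $w^2\Psi\,\partial_\nu\Psi=u^2\,\partial_\nu\Psi/\Psi$, which is $+\infty$ whenever $u\neq 0$ (Hopf gives $\partial_\nu\Psi\neq 0$ while $\Psi=0$). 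Your proposed cure of ``passing to the limit from functions vanishing near $\Sigma$'' cannot work either, since $\Sigma$ is a hypersurface and $H^1$-convergence preserves traces; you cannot approximate a function with nonzero trace on $\Sigma$ by functions vanishing there. Controlling the Hopf slope via the exponential decay of $\phi$ along the arms does not help near the inner vertex of $\omega(\alpha)$, where $\partial_\nu\phi$ is of order one (and in fact grows as $\alpha\to 0$, since the eigenfunction concentrates at the corner).

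The paper closes the core estimate by a much more elementary device that you overlooked: besides the axial Poincar\'e $\|\partial_z u\|^2_{\text{core}}\ge(\pi^2/4)\|u\|^2_{\text{core}}$, one also has a \emph{transverse} Friedrichs inequality. Split the core by the bisector plane of $W$ and, in each half, foliate by horizontal segments perpendicular to the nearer outer face of $W$; these segments start on $\Gamma'$ (Dirichlet) and have length at most $1$, so $\|\nabla_{z^\perp}u\|^2_{\text{core}}\ge(\pi^2/4)\|u\|^2_{\text{core}}$. Adding the two contributions gives $\|\nabla u\|^2_{\text{core}}\ge(\pi^2/2)\|u\|^2_{\text{core}}$, and since $\lambda_1(\omega(\alpha))\to\pi^2/4$ as $\alpha\to 0$ (Proposition~\ref{WaveguidePropertiesProp}(vi)), one has $\pi^2/2\ge\lambda_1(\omega(\alpha))$ for $\alpha$ small, which is exactly what is needed. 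No interface terms, no trace estimates, no ground-state substitution.
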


\begin{figure}[ht!]
    \centering
    \includegraphics[width=0.42\textwidth]{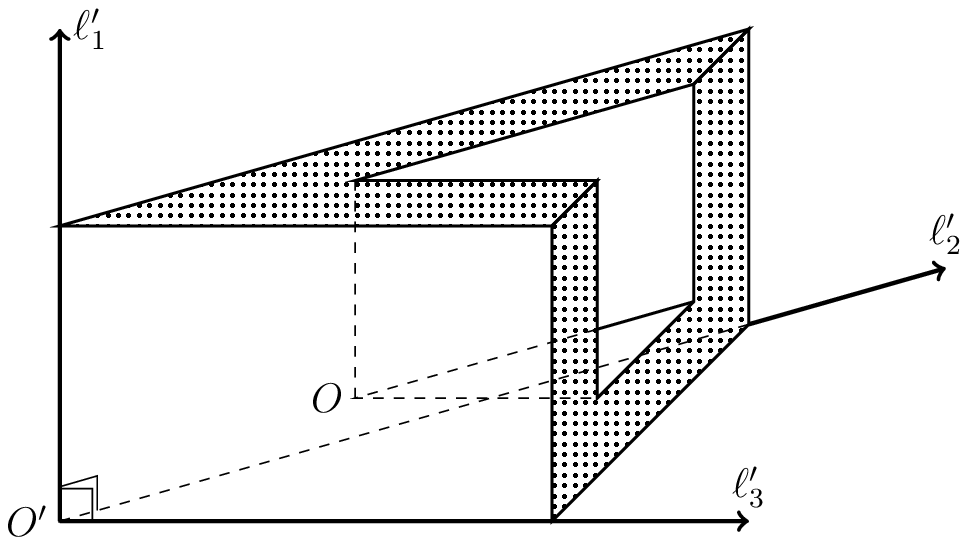}\qquad
    \includegraphics[width=0.42\textwidth]{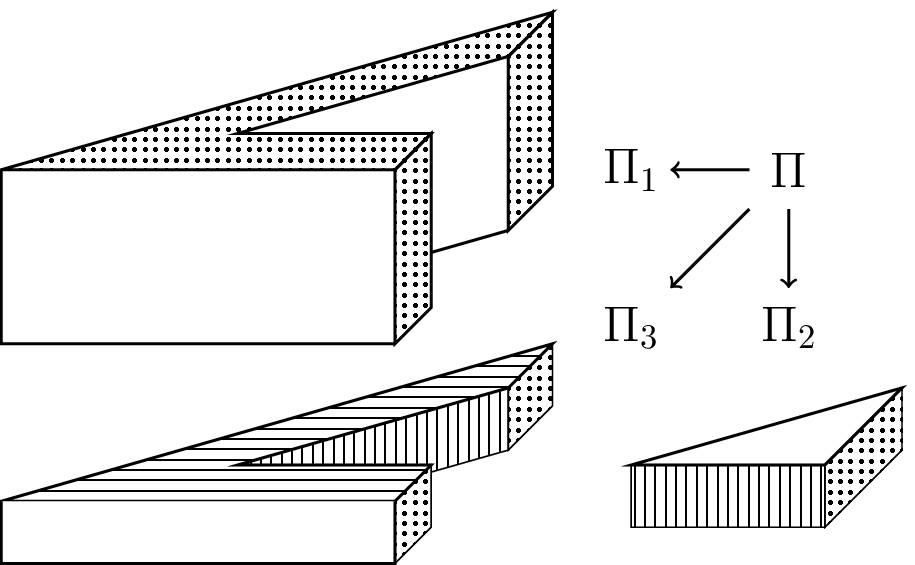}
    \caption{A layer constructed from a trihedral angle with two right vertex angles and a small third vertex angle and its partition}
    \label{fig-03}
\end{figure}

\begin{proof}
The essential spectrum in the domain $\Pi$ by the Theorem \ref{ess_sp_th} is determined by the smallest dihedral angle, which is equal to $\alpha$. Thus, it is sufficient to check that for any function $u\in \lefteqn{\overset\circ{\hphantom{H'}\vphantom{\prime}}}H\vphantom{H}^1(\Pi)$ the following inequality holds
\begin{equation}
\label{absent_of_ev_eq}
\|\nabla u; L_2(\Pi)\|^2\geq \lambda_1(\omega(\alpha)) \|u; L_2(\Pi)\|^2.
\end{equation}
Assume that $\ell_1$ is an edge of a smallest dihedral angle equal to $\alpha$. Let's use the coordinate system $(x_1, y_1, z_1)$ associated with this angle, as described in \S \ref{sec-2}.
To check the inequality \eqref{absent_of_ev_eq} we divide $\Pi$ into three parts $\Pi= \Pi_1\cup \Pi_2\cup \Pi_3$
\begin{eqnarray*}
&&\Pi_1=\Pi \cap \{z_1>0\},    \\
&&\Pi_2= \Pi\cap \{0< y_1 < \tan(\alpha) x_1 \},\\
&&\Pi_3= \Pi\setminus (\Pi_1\cup \Pi_2).
\end{eqnarray*}
The parts $\Pi_j$ are shown schematically in Fig. \ref{fig-03} on the right.

In the part $\Pi_1$ we integrate over $z_1$ the Friedrichs inequality on the cross-sections perpendicular to the ray $\ell_1$ and obtain the estimate
$$
\|\nabla u; L_2(\Pi_1)\|^2\geq \lambda_1(\omega(\alpha)) \|u;L_2(\Pi_1)\|^2.
$$
In the part $\Pi_2$ the Friedrichs inequality on segments parallel to the ray $\ell_1$ leads to the estimate
$$
\|\partial_{z_1} u; L_2(\Pi_2)\|^2\geq \pi^2 \|u;L_2(\Pi_2)\|^2 \geq \lambda_1(\omega(\alpha)) \|u;L_2(\Pi_2)\|^2.$$
In the part $\Pi_3$ for the $z_1$-derivative we write the Friedrichs inequality along segments parallel to the $\ell_1$ axis
\begin{equation}
    \label{partial_z_1_estimate}
    \|\partial_{z_1} u; L_2(\Pi_3)\|^2\geq \frac{\pi^2}{4}\|u; L_2(\Pi_3)\|^2.
\end{equation}
To estimate the transverse gradient, we split the part $\Pi_3$ by the vertical plain of symmetry contained $O'\ell'_1$, and write  the Friedrichs inequality along segments, that are orthogonal to the outer boundary.
As the lengths of the segments do not exceed one, we obtain that
\begin{equation}
    \label{nabla_z_1perp_estimate}
    \|\nabla_{z_1^\perp} u; L_2(\Pi_3)\|^2\geq \frac{\pi^2}{4}\|u; L_2(\Pi_3)\|^2.
\end{equation}
Combining together the inequalities  \eqref{partial_z_1_estimate} and \eqref{nabla_z_1perp_estimate}, we conclude that
$$
\|\nabla u; L_2(\Pi_3)\|^2\geq (\pi^2/4+\pi^2/4) \|u;L_2(\Pi_3)\|^2 \geq \lambda_1(\omega(\alpha)) \|u;L_2(\Pi_3)\|^2.
$$
The last inequality by point (vi) of Proposition \ref{WaveguidePropertiesProp} is true for sufficiently small angles~$\alpha$.
\end{proof}

\section{Existence of a discrete spectrum for regular angles}
\label{sec-7}

Recall that we call an angle layer regular if its outer boundary $\Gamma'$ forms a regular polyhedral cone, i.e. all dihedral angles $\beta_j$ are equal and all vertex angles $\alpha_j$ at the vertex $O'$ are equal. In what follows we denote the corresponding quantities by $\beta$ and~$\alpha$; the plane waveguide $\omega(\beta)$ in the section perpendicular to the edge of the angle layer is denoted by $\omega$.

\begin{theorem} \label{ExistDiscrSp}
In a regular angular layer the discrete spectrum is not empty.
\end{theorem}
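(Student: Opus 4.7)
I plan to exhibit a test function $w\in\lefteqn{\overset\circ{\hphantom{H'}\vphantom{\prime}}}H\vphantom{H}^1(\Pi)$ whose Rayleigh quotient $\|\nabla w;L_2(\Pi)\|^2/\|w;L_2(\Pi)\|^2$ is strictly below $\Lambda_\dagger$. Combined with the max-min principle recalled in Section \ref{sec-2} and Theorem \ref{Finitness_of_discrete_sp}, this forces at least one point of the discrete spectrum below $\Lambda_\dagger$. The construction rests on two facts specific to the regular case: (a) the pieces $\varpi_j$ of the partition \eqref{partition} are mutually congruent and each cutting plane $\pi_j$ is a genuine reflective symmetry plane of $\Pi$; (b) the cross-sections of $\varpi_j$ perpendicular to $Oz_j$ for $z_j>0$ are precisely the truncated planar L-waveguides $\omega^{R(z_j)}$ of Proposition \ref{WaveguidePropertiesProp}(iv), with the outlet-cut boundaries lying on $\pi_{j-1}\cup\pi_j$ and with $R(z_j)$ growing linearly in $z_j$.

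\textbf{Construction.} Fix $R_0$ so large that $\lambda:=\lambda_1(\omega^{R_0})<\Lambda_\dagger=\lambda_1(\omega)$, which is possible by Proposition \ref{WaveguidePropertiesProp}(iv). Let $v$ be the corresponding ground state on $\omega^{R_0}$, normalised in $L_2(\omega^{R_0})$. Since $\omega^{R_0}$ is invariant under reflection across the bisector of its interior angle and $\lambda_1(\omega^{R_0})$ is simple, $v$ may (and will) be chosen symmetric under this reflection. Pick $z_0>0$ large enough that $R(z)\geq R_0$ for $z\geq z_0$, and pick any $\phi\in C^\infty_c((z_0,+\infty))$ with $\phi\not\equiv0$. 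Define
$$
w(x) := v(x_j^\perp)\,\phi(z_j)\qquad\text{on } \varpi_j,\ j=1,\dots,n,
$$
with $v$ extended by zero outside $\omega^{R_0}$ within each cross-section $\omega^{R(z_j)}$.

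\textbf{Admissibility and calculation.} The reflection across $\pi_j$ swaps $\ell_j\leftrightarrow\ell_{j+1}$ and $\varpi_j\leftrightarrow\varpi_{j+1}$; in particular it preserves the axial distance from $O$ (so $z_j=z_{j+1}$ on $\pi_j$) and acts on the transverse slice exactly as the L-bisector reflection. Together with the symmetric choice of $v$ this yields $v(x_j^\perp)\phi(z_j)=v(x_{j+1}^\perp)\phi(z_{j+1})$ on $\pi_j$, so the $n$ pieces match continuously; combined with the Dirichlet property of $v$ on the L-boundary and the compact support of $\phi$ one obtains $w\in\lefteqn{\overset\circ{\hphantom{H'}\vphantom{\prime}}}H\vphantom{H}^1(\Pi)$. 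Fubini on one cell, together with $\|v;L_2(\omega^{R_0})\|=1$, gives
$$
\|\nabla w;L_2(\varpi_j)\|^2 = \lambda\|\phi\|_{L_2}^2 + \|\phi'\|_{L_2}^2, \qquad \|w;L_2(\varpi_j)\|^2 = \|\phi\|_{L_2}^2,
$$
so after summing over $j$ the Rayleigh quotient on $\Pi$ equals $\lambda+\|\phi'\|^2/\|\phi\|^2$. Rescaling to $\phi_L(z):=\phi_0((z-z_0)/L)$ makes $\|\phi_L'\|^2/\|\phi_L\|^2=O(L^{-2})$, which is strictly less than $\Lambda_\dagger-\lambda>0$ for $L$ large enough, completing the argument.

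\textbf{Main obstacle.} The delicate step is the verification $w\in H^1(\Pi)$, i.e.\ the matching of the piecewise definitions across each $\pi_j$. This relies on the precise coordination of the 2D bisector symmetry of $v$ with the 3D reflective symmetry of $\Pi$ across $\pi_j$, and uses both the equality of the dihedral angles and the equality of the vertex angles --- the regularity hypothesis is thus used in an essential way, in agreement with Theorem \ref{abs_of_ds}, which shows the conclusion genuinely fails without some such hypothesis.
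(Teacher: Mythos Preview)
There is a genuine gap. Your transverse profile $v$ is the ground state of the \emph{mixed} problem on $\omega^{R_0}$ (Dirichlet on $\partial\omega^{R_0}\cap\partial\omega$, Neumann on the outlet cuts), as in Proposition~\ref{WaveguidePropertiesProp}(iv). Such $v$ is strictly positive on the Neumann segments, so its extension by zero to the larger cross-section $\omega^{R(z_j)}$ has a jump along the two flat interfaces $\partial\omega^{R_0}\setminus\partial\omega$, which sit in the \emph{interior} of $\varpi_j$ for every $z_j>z_0$. Hence $w=v\,\phi$ is not in $H^1(\varpi_j)$, let alone in $\lefteqn{\overset\circ{\hphantom{H'}\vphantom{\prime}}}H\vphantom{H}^1(\Pi)$; your admissibility paragraph only checks the matching across the planes $\pi_j$ (where $w$ is indeed zero on both sides) and overlooks this interior discontinuity. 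Moreover the defect cannot be repaired within the same product ansatz: if you insist that $v\in H^1$ of the cross-section after extension by zero, then $v$ must vanish on the outlet cuts as well, i.e.\ $v\in H^1_0(\omega^{R_0})$, and then the transverse Rayleigh quotient is $\geq\lambda_1^{\mathrm{Dir}}(\omega^{R_0})>\lambda_1(\omega)=\Lambda_\dagger$ by domain monotonicity, so $\|\nabla w\|^2/\|w\|^2\geq\Lambda_\dagger+\|\phi'\|^2/\|\phi\|^2>\Lambda_\dagger$ for every $\phi$.

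The paper's argument avoids this obstruction precisely by \emph{not} truncating the cross-section: it takes the eigenfunction $v_\dagger$ of the full infinite ${\sf L}$-waveguide $\omega$, sets $V^\varepsilon=v_\dagger(z_\dagger^\perp)e^{-\varepsilon z_\dagger}$ on each $\varpi_j$, and uses the rotational symmetry to match these pieces across the $\pi_j$ (where $v_\dagger$ is genuinely nonzero). The transverse contribution then equals $\lambda_1(\omega)\|V^\varepsilon\|^2$ exactly, and the gain below $\Lambda_\dagger$ comes from a \emph{negative boundary integral} over the cutting planes $\pi_j$, produced by integration by parts and surviving in the limit $\varepsilon\to0$. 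In your construction that boundary term is killed because $w$ vanishes near $\pi_j$; this is why a compactly-supported-in-cross-section test function cannot work here.
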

\begin{proof}

According to the maximum-minimum principle 
to prove the existence of an eigenvalue below the threshold $\Lambda_\dagger = \lambda_1(\omega)$, it is sufficient to construct a function $V\in \lefteqn{\overset\circ{\hphantom{H'}\vphantom{\prime}}}H\vphantom{H}^1(\Pi)$,
such that the following inequality holds
\begin{equation} \label{FirstEVEnq}
\|\nabla V;\, L_2(\Pi)\|^2 - \lambda_1(\omega)\|V;\, L_2(\Pi)\|^2
< 0.
\end{equation}
Similarly to the proof of the Theorem \ref{Finitness_of_discrete_sp}, we cut the layer into parts $\varpi_j$ (partition~\eqref{partition}). Since they are all the same, we will denote one of them as $\varpi_\dagger$.
Define in $\varpi_\dagger$ the function $V^\varepsilon(x_\dagger, y_\dagger, z_\dagger) = v_\dagger(x_\dagger, y_\dagger) e^{-\varepsilon z_\dagger}$,
where  $\varepsilon > 0$, a $v_\dagger$ is the first eigenfunction of the Dirichlet Laplacian in the broken waveguide $\omega$.
A regular angular layer has a rotational symmetry, thus the function~$V^\varepsilon$ is continuous  on the faces $\pi_{j-1}\cap\varpi_j$ and $\pi_j\cap\varpi_j$, and hence $V^\varepsilon\in\lefteqn{\overset\circ{\hphantom{H'}\vphantom{\prime}}}H\vphantom{H}^1(\Pi)$.
Substituting $V^\varepsilon$ into the left-hand side of the inequality~\eqref{FirstEVEnq}, after integrating by parts we have
\begin{multline} \label{VinFeirstEnq}
(-\Delta_{z_\dagger^\perp} V^\varepsilon, V^\varepsilon)_{\varpi_\dagger} - \lambda_1(\omega)\|V^\varepsilon;\, L_2(\varpi_\dagger)\|^2 +\\+
\varepsilon^2\|V^\varepsilon;\,L_2(\varpi_\dagger)\|^2 + \int_{\partial \varpi_\dagger} \vec{n} \cdot \nabla_{z_\dagger^\perp} v_\dagger(z^\perp_\dagger) v_\dagger(z^\perp_\dagger) e^{-2\varepsilon z_\dagger} dS,
\end{multline}
where $\vec{n} = (n_x, n_y, n_z)$ is the outer normal vector to the boundary  $\partial \varpi_\dagger \setminus (\Gamma \cup \Gamma')$, and $dS$~is the standard surface Lebesgue measure on this boundary. 
Since $v_\dagger$ is an eigenfunction corresponding to $\lambda_1(\omega)$, the first two terms cancel.
The plane of symmetry passing through the $Oz_\dagger$ axis divides the layer $\varpi_\dagger$ into two parts $\varpi_+$ and $\varpi_-$, we assume that $\varpi_+$ is directed along $Ox_\dagger$.
Denote the common end of $\varpi_\dagger$ and $\varpi_+$ lying in the plane $z_\dagger = \cot(\alpha / 2) x_\dagger$ by~$\pi_+$.
As a result, the sum \eqref{VinFeirstEnq} is converted to the form
\[
\varepsilon^2\|V^\varepsilon;\,L_2(\varpi_\dagger)\|^2 + 2\!\int_{\pi_+}
\!\!\!\!n_x\partial_{x_\dagger} v_\dagger(x_\dagger, y_\dagger) v_\dagger(x_\dagger, y_\dagger)
e^{-2\varepsilon \cot(\alpha/2) x_\dagger}
dS(x_\dagger,y_\dagger).
\]
Given that
$dS(x_\dagger, y_\dagger) = \sin^{-1}(\alpha/2)\,dx_\dagger dy_\dagger$,
and $n_x = \cos(\alpha_\dagger / 2)$, the last integral can be written
as a double integral over one of the two symmetrical halves of the waveguide~$\omega$. Let's denote one of these halves directed along the $Ox_\dagger$ axis by $\omega_+$, and the other by~ $\omega_-$.
In this case the expression takes the form
\[
\varepsilon^2\|v_\dagger;\,L_2(\varpi_\dagger)\|^2 + 2 \cot\left(\frac\alpha2\right)\int_{\omega_+}  \partial_{x_\dagger} v_\dagger(x_\dagger, y_\dagger) v_\dagger(x_\dagger, y_\dagger)e^{-2\varepsilon \cot(\alpha/2) x_\dagger} d x_\dagger dy_\dagger.
\]
Integrating by parts we obtain 
\begin{multline} \label{LastEnqNegRayQ}
    \varepsilon^2\|v_\dagger;\,L_2(\varpi_\dagger)\|^2 +
    2\varepsilon \cot^2\left(\frac\alpha2\right)\int_{\omega_+}\!\! |v_\dagger(x_\dagger, y_\dagger)|^2\exp(-2\varepsilon\cot(\alpha/2) x_\dagger) dx_\dagger dy_\dagger - \\ - 2 \cot\left(\frac\alpha 2\right) \sin\left(\frac\beta2\right) \int_{\gamma_0}|v_\dagger(x_\dagger(\tau),y_\dagger(\tau))|^2\exp(-2\varepsilon \cot(\alpha/2) x_\dagger(\tau)) d\tau,
\end{multline}
where $\gamma_0 = \partial\omega_-\cap\partial\omega_+$, and
$\tau$~is the natural parametrization of the boundary along~$\gamma_0$.
Note that for sufficiently small $\varepsilon$ the values \eqref{LastEnqNegRayQ} are negative, so for small $\varepsilon$
the function~$V^\varepsilon$ satisfies the inequality \eqref{FirstEVEnq}.
\end{proof}

\section{Possible generalizations}
\label{sec-8}

The results of Theorems \ref{Finitness_of_discrete_sp} and \ref{ess_sp_th} can be extended to layers constructed from angles that are not necessarily circumscribed around balls. To do this, it is sufficient to construct an analogue of the partition \eqref{partition}, the structure of which will be violated only in a compact neighborhood of the angle vertex, which will not affect the essential spectrum of the operator. Note that the thickness of the walls of the angular layer can also be made different for different faces. In general, the results will remain similar, and only the analysis of the spectra of {\sf L}-shaped waveguides will become more complicated.

For a regular polygonal layer with small planar angles near the vertex (and dihedral angles close to $\pi$) it is easy to prove that the number of eigenvalues below the continuous spectrum can be made arbitrarily large. The proof is absolutely analogous to the results in \cite{DaRa12, BaMaNaZaa}. We expect that there is also monotonicity with respect to the opening angle, similar to one proved in \cite[Proposition 3.1.]{DaLaRa12}, but at the moment this statement remains unproven. In addition, most likely, for sufficiently large opening angles, we expect uniqueness of the eigenvalue below the continuous spectrum.

Increasing the number of faces in a regular angle inscribed in a given cone should also lead, in our opinion, to an increase in the number of eigenvalues below the continuous spectrum, since the region visually becomes similar to a conical layer (cf. works \cite{ExTa10, DaOuRa15}). However, it turns out that it is not true that a regular polygonal layer can be inscribed in a conical layer, so it is not possible to prove this statement by simple estimates between operators.

\end{document}